\DeclareSymbolFont{SY}{U}{psy}{m}{n}
\DeclareMathSymbol{\emptyset}{\mathord}{SY}{'306}
\theoremstyle{plain}
\newtheorem{thm}{Theorem}[section]
\newtheorem{cor}[thm]{Corollary}
\newtheorem{lem}[thm]{Lemma}
\newtheorem{prop}[thm]{Proposition}
\theoremstyle{definition}
\newtheorem{defn}[thm]{Definition}
\newtheorem{rem}[thm]{Remark}
\numberwithin{equation}{section}
\def\g{\gamma}
\def\wi{\widetilde}
\def\beq{\begin{eqnarray}}
\def\eeq{\end{eqnarray}}
\def\beqa{\begin{eqnarray*}}
\def\eeqa{\end{eqnarray*}}
\begin{document}
\title{Trace formulae for curvature of Jet Bundles over planar domain}
\author[Keshari]{Dinesh Kumar Keshari}
\address[Keshari]{Department of Mathematics, Texas A\&M University,
College Station, TX -77843, USA} \email{kesharideepak@gmail.com}
\thanks{ The work of author  was supported by IISc Research Associate
Fellowship at the Indian Institute of Science.}
\keywords{Cowen-Douglas class, curvature,
 Hermitian holomorphic vector bundle, Jet Bundle}

\begin{abstract}
For a domain $\Omega$ in $\mathbb{C}$ and an operator $T$ in
$\mathcal{B}_n(\Omega)$, Cowen and Douglas construct a Hermitian
holomorphic vector bundle $E_T$ over $\Omega$ corresponding to
$T$.
The Hermitian holomorphic vector bundle $E_T$ is obtained as
a pull-back of the tautological bundle $S(n,\mathcal{H})$ defined
over $\mathcal{G}r(n,\mathcal{H})$ by a nondegenerate holomorphic
map $z\mapsto {\rm{ker}}(T-z),\;z\in\Omega$. To find the answer to the converse, Cowen
and Douglas studied the jet bundle in their foundational paper.
 The computations in this paper for the
curvature of the jet bundle are somewhat difficult to comprehend.
They have given a set of invariants to determine if two rank $n$
Hermitian holomorphic vector bundle are equivalent. These
invariants are complicated and not easy to compute. It is natural
to expect that the equivalence of Hermitian holomorphic jet
bundles should be easier to characterize. In fact, in the case of
the Hermitian holomorphic jet bundle
$\mathcal{J}_k(\mathcal{L}_f)$, we have shown that the curvature of
the line bundle $\mathcal{L}_f$ completely determines the class of
$\mathcal{J}_k(\mathcal{L}_f)$. In case of rank $n$ Hermitian Holomorphic
vector bundle $E_f$, We have calculated the curvature of jet bundle
$\mathcal{J}_k(E_f)$ and also have generalized the trace formula for
jet bundle $\mathcal{J}_k(E_f)$.
\end{abstract}

\maketitle
\section{Introduction}
Let $\mathcal H$ be a complex separable Hilbert space and $\mathcal
L(\mathcal H)$ denote the collection of bounded linear operators on
$\mathcal H$. The following important class of operators was introduced
in \cite{cd}.

\begin{defn}
For a connected open subset $\Omega$ of $\mathbb C$ and a
positive
integer $n$, let
\begin{eqnarray*}
\mathcal{B}_n(\Omega)  =  &\big \{& T\in\mathcal L(\mathcal H)\,|\,\,
\Omega\subset\sigma(T),\\
&& {\mathrm{ran}}\,(T-w)= \mathcal H\mbox{ for }w\in\Omega, \\
&&\bigvee_{w\in\Omega}\ker(T-w)= \mathcal H,\\
&&\dim~\ker(T-w)= n\mbox{ for } w\in\Omega\,\,  \big \},
\end{eqnarray*}
where $\sigma(T)$ denotes the spectrum of the operator $T$.
\end{defn}

We recall (cf. \cite{cd}) that an operator $T$ in the class $\mathcal{B}_n(\Omega)$
defines a Hermitian holomorphic vector bundle $E_T$ in a natural manner. It is
the sub-bundle of the trivial bundle $\Omega\times\mathcal H$ defined by
$$E_T= \{(w, x)\in\Omega\times\mathcal H: x\in \ker(T-w)\}$$
with the natural projection map $\pi:E_T\to \Omega$,  $\pi(w, x)=
w$. It is shown in \cite[Proposition 1.12]{cd} that the mapping
$w\longrightarrow \ker(T-w)$ defines a rank $n$
Hermitian holomorphic vector bundle $E_T$ over $\Omega$ for $T\in
\mathcal{B}_n(\Omega)$. In \cite{cd}, it was also shown that the equivalence
class of the Hermitian holomorphic vector bundle $E_T$ and the
unitary equivalence class of the operator $T$ determine each
other.

\begin{thm} \label{eq}
The operators $T$ and $\wi T$ in $\mathcal{B}_n(\Omega)$ are unitarily equivalent
if and only if the corresponding Hermitian holomorphic vector bundles $E_T$ and
$E_{\wi T}$ are equivalent.
\end{thm}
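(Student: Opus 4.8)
The plan is to prove both directions separately, with the harder and more interesting direction being that bundle equivalence implies unitary equivalence of the operators.

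\textbf{The easy direction.} Suppose $T$ and $\wi T$ are unitarily equivalent, say $U T U^{-1} = \wi T$ for a unitary $U$ on $\mathcal H$. Then $U$ maps $\ker(T-w)$ onto $\ker(\wi T - w)$ for every $w \in \Omega$, since $x \in \ker(T-w)$ iff $(\wi T - w)Ux = U(T-w)x = 0$. Thus the map $(w,x) \mapsto (w, Ux)$ is a well-defined bundle map $E_T \to E_{\wi T}$; it is holomorphic because it is the restriction of the holomorphic bundle map $\mathrm{id}_\Omega \times U$ on the trivial bundle $\Omega \times \mathcal H$, and it is isometric on each fiber because $U$ is unitary on $\mathcal H$. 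Hence $E_T$ and $E_{\wi T}$ are equivalent as Hermitian holomorphic vector bundles.

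\textbf{The hard direction.} Conversely, suppose $\Phi : E_T \to E_{\wi T}$ is an isometric holomorphic bundle isomorphism covering the identity on $\Omega$. The goal is to produce a unitary $U$ on $\mathcal H$ intertwining $T$ and $\wi T$. First I would choose, on a suitable open set (or locally and then patch), a holomorphic frame $\gamma_1(w), \dots, \gamma_n(w)$ for $E_T$, i.e.\ holomorphic $\mathcal H$-valued functions with $\gamma_i(w) \in \ker(T-w)$ spanning the fiber; such frames exist by \cite[Proposition 1.12]{cd}. Setting $\tilde\gamma_i(w) = \Phi(w)\gamma_i(w)$ gives a holomorphic frame for $E_{\wi T}$, and the isometry of $\Phi$ means the two Gram matrices $\bigl( \langle \gamma_i(w), \gamma_j(w) \rangle \bigr)$ and $\bigl( \langle \tilde\gamma_i(w), \tilde\gamma_j(w) \rangle \bigr)$ coincide. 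The crucial point is then that equality of these metrics forces the existence of a single unitary $U$ with $U\gamma_i(w) = \tilde\gamma_i(w)$ for all $i$ and all $w$: one defines $U$ on the dense subspace $\bigvee_w \ker(T-w) = \mathcal H$ by $U\bigl(\sum c_i \gamma_i(w)\bigr) = \sum c_i \tilde\gamma_i(w)$ and checks, using the equality of Gram matrices together with the fact that finite linear combinations of the $\gamma_i(w)$ over varying $w$ are dense, that $U$ is well-defined and inner-product preserving, hence extends to a unitary on $\mathcal H$. Finally, since $U\gamma_i(w) = \tilde\gamma_i(w) \in \ker(\wi T - w)$, we get $\wi T U \gamma_i(w) = w U \gamma_i(w) = U w \gamma_i(w) = U T \gamma_i(w)$, so $\wi T U = U T$ on a total set, hence everywhere.

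\textbf{Main obstacle.} The delicate step is the well-definedness and isometry of $U$ on the dense span: one must argue that any linear relation among the vectors $\{\gamma_i(w) : i, w\}$ is preserved after applying $\Phi$ pointwise. This is where one uses that the inner products $\langle \gamma_i(w), \gamma_j(w') \rangle$ for \emph{different} base points $w, w'$ are also determined — this follows by analytic continuation (polarization) from the equality of the metrics $h_{ij}(w) = \langle \gamma_i(w), \gamma_j(w)\rangle$ on the diagonal, since $w \mapsto \langle \gamma_i(w), \gamma_j(\bar w) \rangle$ type expressions, or more precisely the sesquianalytic extension $(w, w') \mapsto \langle \gamma_i(w), \gamma_j(w') \rangle$, is holomorphic in $w$ and anti-holomorphic in $w'$ and agrees with its $\wi T$-counterpart on the totally real diagonal, hence everywhere. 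Once this rigidity is in hand, $U$ preserves all inner products among a spanning set and the rest is routine. Handling the patching of local frames into a global statement, or equivalently invoking that $\Omega$ is connected so that analytic continuation is unobstructed, is the remaining bookkeeping.
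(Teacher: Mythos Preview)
The paper does not actually prove this theorem: it is stated in the introduction as a result due to Cowen and Douglas \cite{cd} and is quoted without proof, serving only as background for the paper's own work on jet bundles. So there is no ``paper's own proof'' to compare your proposal against.

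That said, your sketch is essentially the argument Cowen and Douglas give in \cite{cd}. The easy direction is correct as written. For the hard direction, your outline --- transport a holomorphic frame via the bundle isometry, observe the Gram matrices agree, use sesquianalytic continuation off the diagonal to conclude that the full two-point kernels $\langle \gamma_i(w), \gamma_j(w')\rangle$ agree, and then define $U$ on the dense span --- is exactly the rigidity argument in \cite[Theorem~1.6 and its proof]{cd}. The obstacle you flag (well-definedness of $U$ via equality of cross inner products) is indeed the crux, and your polarization argument is the right mechanism. One point you could sharpen: rather than comparing $\gamma_i(w)$ and $\gamma_j(w')$ directly, Cowen and Douglas differentiate the frame and compare the derivatives $\partial^k \gamma_i(w)$ at a single point, which span a dense subspace; this avoids the patching of local frames you mention at the end and makes the analytic continuation step cleaner. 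But the underlying idea is the same, and your proposal is a correct outline of the original proof.
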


In general, it is not easy to decide if two  Hermitian holomorphic vector
bundles are equivalent except when the rank of the bundle is $1$. In this
case, the curvature
\begin{eqnarray*}\mathcal
K(w)= - \frac{~\partial^2\log\parallel{\gamma(w)}\parallel^2}{\partial{w}
\partial{\overline{w}}},
\end{eqnarray*}
of the line bundle $E$, defined with respect to a non-zero holomorphic section
$\g$ of $E$, is a complete invariant. The definition of the curvature is
independent of the choice of the section $\g$:  If $\g_0$ is another holomorphic
section of $E$, then
$\g_0=\phi\g$ for some holomorphic function $\phi$ on some open subset
$\Omega_0$ of $\Omega$, consequently the harmonicity of log$|\phi|$ completes
the verification.

For a domain $\Omega$ in $\mathbb{C}$ and an operator $T$ in
$\mathcal{B}_n(\Omega)$, the Hermitian holomorphic vector bundle
$E_T$ is obtained as a pull-back of the tautological bundle
$S(n,\mathcal{H})$ defined over $\mathcal{G}r(n,\mathcal{H})$ by a
nondegenerate holomorphic map $z\mapsto
{\rm{ker}}(T-z),\;z\in\Omega$ as in Definition \ref{jet:inf
non:def}. To find the answer to the converse, namely, when a given
Hermitian holomorphic vector bundle is a pull-back of the
tautological bundle by a nondegenerate holomorphic map, Cowen and
Douglas studied the jet bundle in their foundational paper
\cite[pp. 235]{cd}. The computations in this paper for the
curvature of the jet bundle are somewhat difficult to comprehend.
They have given a set of invariants to determine if two rank $n$
Hermitian holomorphic vector bundle are equivalent. These
invariants are complicated and not easy to compute. It is natural
to expect that the equivalence of Hermitian holomorphic jet
bundles should be easier to characterize. In fact, in the case of
the Hermitian holomorphic jet bundle
$\mathcal{J}_k(\mathcal{L}_f)$, where the line bundle
$\mathcal{L}_f$ is a pull-back of the tautological bundle on
$\mathcal{G}r(1,\mathcal{H})$, we have shown that the curvature of
the line bundle $\mathcal{L}_f$ completely determines the class of
$\mathcal{J}_k(\mathcal{L}_f)$. In general, however, our results
are not as complete. Relating the complex geometric invariants
inherent in the short exact sequence
\begin{eqnarray}
0\to E_{I}\to E \to E_{II}\to 0.
\end{eqnarray}
is an important problem. In the paper \cite{bottchern}, it is
shown that the Chern classes of these bundles must satisfy
$$c(E)=c(E_I)\,c(E_{II}).$$ Donaldson \cite{donald} obtains similar
relations involving what are known as secondary invariants. We
obtain a refinement, in case $E_I= \mathcal{J}_k(E_f)$ and
$E=\mathcal{J}_{k+1}(E_f)$, namely,

$$\big({\rm{trace}}\otimes
{\rm{Id}}_{n\times n}\big)(\mathcal{K}_{\mathcal{J}_k(E_f)})-
\big({\rm{trace}}\otimes {\rm{Id}}_{n\times
n}\big)(\mathcal{K}_{\mathcal{J}_{k-1}(E_f)})=\mathcal{K}_{{\mathcal{J}_k(E_f)}/{\mathcal{J}_{k-1}(E_f)}}.$$


\section{Definitions and Notations}

Here we  give the definition of a jet bundle closely following
\cite{cd}. An equivalent description, in a slightly different
language, may be found in \cite{mw}.

 Let $E$ be a Hermitian holomorphic bundle of rank $n$
over a bounded domain $\Omega\subset\mathbb{C}$. For each
$k=0,1,\ldots$ we associate to $E$ a $(k+1)n$ -dimensional
holomorphic bundle $\mathcal{J}_k(E)$, the holomorphic k-jet
bundle of $E$, defined as follows:

If $\sigma=\{\sigma_1,\ldots,\sigma_n\}$ is a holomorphic frame
for $E$, on an open subset $U$ contained in $\Omega$, then
$\mathcal{J}_k(E)$ has an associated frame
$$\mathcal{J}_k(\sigma)=\{\sigma_{10},\ldots,\sigma_{n0},\ldots,\sigma_{1k},\ldots,\sigma_{nk}\}$$
defined on $U$. If $\widetilde{\sigma}$ is another frame for $E$
defined on $\widetilde{U}$, then on $U\cap\widetilde{U}$, we have
$\widetilde{\sigma}_j=\sum a_{ij}\sigma_i$, where $A=(a_{ij})$ is
a holomorphic, $n\times n$, nonsingular matrix. Symbolically
$$\widetilde{\sigma}= \sigma A.$$ \smallskip Let
$\mathcal{J}_k(A)$ be the $(k+1)n\times (k+1)n$, non singular,
holomorphic matrix

 $$\mathcal{J}_k(A)=\left(%
\begin{array}{ccccc}
  A & A^{\prime} & A^{\prime\prime} & \cdots & \tbinom{k}{k} A^{(k)}\\
  \vdots & A & 2A^{\prime} & \cdots & \tbinom{k}{k-1}A^{(k-1)} \\
  \vdots & {} & A & \cdots & \tbinom {k}{k-2}A^{(k-2)} \\
  \vdots & {} & {} & \ddots & \vdots \\
  0 & \cdots & \cdots & \cdots & A \\
\end{array}%
\right).$$
\smallskip
 Then, by definition, the frames
$\mathcal{J}_k(\sigma)$ and $\mathcal{J}_k(\widetilde{\sigma})$
are related on $U\cap \widetilde{U}$ by
$$\mathcal{J}_k(\widetilde{\sigma})=\mathcal{J}_k(\sigma)\mathcal{J}_k(A).$$
A straightforward computation yields that if $A$ and
$\widetilde{A}$ are holomorphic $n\times n$ matrices, then
$$\mathcal{J}_k(A\widetilde{A})=\mathcal{J}_k(A)\mathcal{J}_k(\widetilde{A})$$
so the bundle $\mathcal{J}_k(E)$ is well-defined.

 The Hermitian metric $h$ on $E$ induces a Hermitian form
$\mathcal{J}_k(h)$ on $\mathcal{J}_k(E)$ such that if $h(\sigma)$
is the matrix of inner products
$\big(\!\big(\langle\sigma_j,\sigma_i\rangle\big)\!\big)_{i,j=1}^n$,
then
$$\mathcal{J}_k(h)(\mathcal{J}_k(\sigma))=\left(%
\begin{array}{ccc}
 h(\sigma) & \cdots & \frac{\partial^k h(\sigma)}{\partial z^k} \\
  \vdots & {} & \vdots \\
 \frac{\partial^k h(\sigma)}{\partial \bar{z}^k} &
 \cdots & \frac{\partial^{2k} h(\sigma)}{\partial z^k\partial \bar{z}^k} \\
\end{array}%
\right)$$ is the matrix of $\mathcal{J}_k(h)$ relative to the
frame $\mathcal{J}_k(\sigma)$. To see that $\mathcal{J}_k(h)$ is
well-defined, we need
$$\mathcal{J}_k(h)(\mathcal{J}_k(\widetilde{\sigma}))=\mathcal{J}_k(A)^*
\{\mathcal{J}_k(h)(\mathcal{J}_k(\sigma))\}\mathcal{J}_k(A)$$
which follows from the computation: For $0\leq l_1,l_2\leq k$
\begin{eqnarray}\label{wd1}\frac{\partial^{(l_1+l_2)}}{\partial z^{l_1}\partial
\bar{z}^{l_2}}h(\tilde\sigma)
=\sum_{i=1}^{l_1}\sum_{j=1}^{l_2}\binom{l_1}{i}\binom{l_2}{j}\frac{\partial^j}{\partial
\bar{z}^j}A^*\frac{\partial^{l_2+i-j}}{\partial
\bar{z}^{l_2-j}\partial
z^i}h(\sigma)\frac{\partial^{l_1-i}}{\partial
z^{l_1-i}}A.\end{eqnarray} Using equation (\ref{wd1}), we have
$$\mathcal{J}_k(h)(\mathcal{J}_k(\widetilde{\sigma}))=\mathcal{J}_k(A)^*
\{\mathcal{J}_k(h)(\mathcal{J}_k(\sigma))\}\mathcal{J}_k(A).$$
 In general, the form  $\mathcal{J}_k(h)(z)$ on the jet bundle $\mathcal{J}_k(E)$
 need not be positive definite for $z\in \Omega$. Thus
$\mathcal{J}_k(E)$ has no natural Hermitian metric, just a
Hermitian form.

For $\mathcal{H}$ a complex Hilbert space and $n$ a positive
integer, let $\mathcal{G}r(n,\mathcal{H})$ denote the Grassmann
manifold, the set of all $n$-dimensional subspaces of
$\mathcal{H}$.

\begin{defn}For $\Omega$ an open connected subset of $\mathbb{C}$, we
say that a map $f:\Omega\rightarrow \mathcal{G}r(n,\mathcal{H})$
is holomorphic at $\lambda_0\in \Omega$ if there exists a
neighborhood $U$ of $\lambda_0$ and $n$ holomorphic $\mathcal{H}$-
valued functions $\sigma_1,\ldots,\sigma_n$ on $U$ such that
$f(\lambda)=\bigvee
\{\sigma_1(\lambda),\ldots,\sigma_n(\lambda)\}$ for $\lambda$ in
$U$. If this holds for each $\lambda_0\in\Omega$ then we say that
 $f$ is holomorphic on $\Omega$.\end{defn} If $f:\Omega\rightarrow
\mathcal{G}r(n,\mathcal{H})$ is a holomorphic map, then a natural
$n$-dimensional Hermitian holomorphic vector bundle $E_{f}$ is
induced over $\Omega$, namely,
$$E_{f}=\{(x,\lambda)\in \mathcal{H}\times \Omega : x\in
f(\lambda)\}$$  \mbox{and}
 $$\pi : E_{f}\rightarrow \Omega \;\;\mbox{where} \;\;\pi(x,\lambda)=\lambda.$$

\begin{defn} \label{jet:inf non:def}Let $f:\Omega\rightarrow
\mathcal{G}r(n,\mathcal{H})$ be a holomorphic map. We say that $f$
is $k$- nondegenerate if, for each $w_0\in\Omega$, there exists a
neighborhood $U$ of $w_0$ and $n$ holomorphic $\mathcal{H}$-
valued functions $\sigma_1,\ldots,\sigma_n$ on $U$ such that
$\sigma_1(w),\ldots,\sigma_n(w),\ldots,\sigma^{(k)}_1(w),\ldots
\sigma^{(k)}_n(w)$ are independent for each $w$ in the open set
$U$.  If this holds for all $k=0,1,\ldots,$ then we say that $f$
is
 nondegenerate.
\end{defn}
If $f$ is $k$ nondegenerate, then $f$ induces a holomorphic map
$$j_k(f):\Omega \rightarrow \mathcal{G}r((k+1)n,\mathcal{H})$$ such
that $j_k(f)(w)$ is  the span of
$\sigma_1(w),\ldots,\sigma^{(k)}_n(w)$. If $\sigma$ is a frame for
$E_{f}$ on $U$, let
$j_k(\sigma)=\{\sigma_1,\ldots,\sigma_n,\ldots,\sigma^{(k)}_1\,\ldots,\sigma^{(k)}_n\}$
be the induced frame for $E_{j_k(f)}$. Then $\mathcal{J}_k(E_{f})$
and $E_{j_k(f)}$ are naturally equivalent Hermitian holomorphic
bundles by identifying $\sigma_{ir}$ with $\sigma_i^{(r)}$, since
$\langle\sigma_{ir},\sigma_{js}\rangle={\partial^{r+s}\langle\sigma_i,\sigma_j}\rangle/{\partial
z^r \partial \bar{z}^s}=
\langle\sigma_i^{(r)},\sigma_j^{(s)}\rangle$. In this case
$\mathcal{J}_k(h)$ is a Hermitian metric for
$\mathcal{J}_k(E_{f})$,that is, $\mathcal{J}_k(h)$ is positive
definite.

\begin{defn}
Let $\mathcal{H}$ be a Hilbert space and $\Omega$ be a bounded
domain in $\mathbb{C}^m$. Let ${\mathfrak{G}}_n(\Omega,\mathcal
H)$ be the set of all Hermitian holomorphic vector bundles of rank
$n$ over $\Omega$ which arise as a pull-backs of the tautological
bundle by nondegenerate holomorphic maps. That is, for any
nondegenerate holomorphic map
$f:\Omega\to\mathcal{G}r(n,\mathcal{H})$ the vector bundle
$E_f=\{(x,\lambda)\in \mathcal{H}\times \Omega : x\in
f(\lambda)\}$ is in ${\mathfrak{G}}_n(\Omega,\mathcal H)$.
\end{defn}
\begin{rem}
If $E_f$ is in ${\mathfrak{G}}_n(\Omega,\mathcal H)$, then the
preceding calculation shows that $\mathcal{J}_k(E_f)$ is in
${\mathfrak{G}}_{n(k+1)}(\Omega,\mathcal H)$.
\end{rem}

\section{Line Bundles}
 Let $\mathcal{L}_{f}$ be a Hermitian holomorphic  line bundle
 over a bounded domain
$\Omega\subset\mathbb{C}$. Assume that $\mathcal{L}_f\in
{\mathfrak{G}}_1(\Omega,\mathcal H)$. Let
$\mathcal{J}_{k}(\mathcal{L}_{f})$ be a jet bundle of rank $k+1$
obtained from $\mathcal{L}_{f}$. Let $\sigma$ be a frame for
$\mathcal{L}_{f}$ over an open subset $\Omega_0$ of $\Omega$. A
frame for $\mathcal{J}_{k}(\mathcal{L}_{f})$ over the open set
$\Omega_0$ is easily seen to be the set $\{\sigma,\frac{\partial
\sigma}{\partial z},\frac {\partial^2 \sigma}{\partial
z^2},\ldots,\frac{\partial^{k}\sigma}{\partial z^k}\}$. Let $h$ be
a metric for $\mathcal{L}_{f}$, which is of the form
$$h(z)=\langle \sigma(z),\sigma(z)\rangle.$$ The metric for
the jet bundle $\mathcal{J}_{k}(h)$ is then of the form
$$\mathcal{J}_{k}(h)(z)=
\begin{pmatrix}
   h(z)&  \cdots & \frac {\partial^{k}}{\partial z^k}h(z) \\
  \vdots  & \ddots & \vdots \\
  \frac{\partial^k}{\partial \overline{z}^{k}}h(z) &  \cdots & \frac{\partial^{2k}}{\partial\overline{z}^{k}\partial z^{k}}h(z) \\
\end{pmatrix}.$$ Let $\mathcal{K}_{\mathcal{J}_{k}(\mathcal{L}_{f})}$ be the
curvature of the jet bundle $\mathcal{J}_{k}(\mathcal{L}_{f})$. An
explicit formula for the curvature of a Hermitian holomorphic
vector bundle $E$ is given in \cite[proposition 2.2, pp.
79]{wells}. The curvature
$\mathcal{K}_{\mathcal{J}_{k}(\mathcal{L}_{f})}$ of the jet bundle
therefore takes the form
$$\mathcal{K}_{\mathcal{J}_{k}(\mathcal{L}_{f})}(z)=
\overline{\partial}\{(\mathcal{J}_{k}(h)(z))^{-1}\partial
\mathcal{J}_{k}(h)(z)\},$$ with respect to the metric
$\mathcal{J}_k(h)$ obtained from frame $\{\sigma,\frac{\partial
\sigma}{\partial z},\frac {\partial^2 \sigma}{\partial
z^2},\ldots,\frac{\partial^{k}\sigma}{\partial z^k}\}$. Set
$\mathds{J}^k(z)=(\mathcal{J}_{k}(h)(z))^{-1}\frac{\partial}{\partial
z} \mathcal{J}_{k}(h)(z)$ and note that

\begin{eqnarray*}(\mathcal{J}_{k}(h)(z))^{-1}\partial
(\mathcal{J}_{k}(h)(z))
 = \left(\begin{smallmatrix}
  0 & 0  & \cdots & 0 & (\mathds{J}^k(z))_{1,k+1} \\
  1 & 0  & \cdots & 0 & (\mathds{J}^k(z))_{2,k+1} \\
  \vdots &  \vdots & \ddots & \vdots & \vdots \\
  0 & 0  & \cdots & 1 & (\mathds{J}^k(z))_{k+1,k+1} \\
\end{smallmatrix} \right)dz,\end{eqnarray*}
where $(\mathds{J}^k(z))_{i,k+1}$ is the $(i,k+1)^{\rm th}$ entry
of the matrix $\mathds{J}^k(z)$. The matrix product in the first
equation is of the form $A^{-1}B$, where the first $k$ columns of
$B$ are the last $k$ column of $A$.

Therefore the curvature of the jet bundle
$\mathcal{J}_k(\mathcal{L}_f)$
 is seen to be of the form
$$\mathcal{K}_{\mathcal{J}_k (\mathcal{L}_{f})}(z)=
\begin{pmatrix}
  0 &   \cdots & 0 & b_1(z) \\
  \vdots  & \ddots &\vdots & \vdots \\
  0 &  \cdots & 0 & b_{k}(z) \\
  0 &  \cdots & 0 & \mathcal{K}_{\det(\mathcal{J}_k \mathcal{L})}(z) \\
\end{pmatrix}d\overline{z}\wedge  dz,
$$
 where $b_i(z)= \frac{\partial}{\partial\bar{z}}[(\mathds{J}^k(z))_{i,k+1}],\;\;1\leq i\leq
 k$.

\begin{thm}\label{theo eql} As before, let $\mathcal{L}_{f}$ and $\mathcal{L}_{\tilde{f}}$ be
two Hermitian holomorphic line bundles  over a bounded domain
$\Omega\subset\mathbb{C}$. Let $\mathcal{J}_k(\mathcal{L}_{f})$
and $\mathcal{J}_k(\mathcal{L}_{\tilde f})$ be the corresponding
jet bundles of rank $k+1$. If $\mathcal{J}_k(\mathcal{L}_{f})$ is
locally equivalent to $\mathcal{J}_k(\mathcal{L}_{\tilde f})$,
then $\mathcal{J}_{k-1}(\mathcal{L}_{f})$ is locally equivalent to
$\mathcal{J}_{k-1}(\mathcal{L}_{\tilde f})$.
\end{thm}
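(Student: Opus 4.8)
The plan is to realise $\mathcal{J}_{k-1}(\mathcal{L}_{f})$ as a \emph{canonically} defined Hermitian holomorphic sub-bundle of $\mathcal{J}_{k}(\mathcal{L}_{f})$ — one recoverable from the curvature alone — so that any local equivalence of the $k$-jet bundles is forced to carry it onto the corresponding sub-bundle of $\mathcal{J}_{k}(\mathcal{L}_{\tilde f})$. First I would record the structural fact implicit in Section 3: if $\sigma$ is a frame for $\mathcal{L}_{f}$ over $\Omega_0$, then $\{\sigma,\partial\sigma/\partial z,\ldots,\partial^{k-1}\sigma/\partial z^{k-1}\}$ spans a holomorphic sub-bundle of $\mathcal{J}_{k}(\mathcal{L}_{f})$ that is naturally identified with $\mathcal{J}_{k-1}(\mathcal{L}_{f})$, and that the Hermitian form $\mathcal{J}_{k}(h)$ restricts on it to the top-left $k\times k$ block of $\mathcal{J}_k(h)$, which is exactly $\mathcal{J}_{k-1}(h)$. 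Since $f$ is nondegenerate this block is positive definite, so $\mathcal{J}_{k-1}(\mathcal{L}_{f})$ sits isometrically inside $\mathcal{J}_{k}(\mathcal{L}_{f})$ as a Hermitian holomorphic sub-bundle; the same holds for $\tilde f$.

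The crucial step is the identification of this sub-bundle with the kernel of the curvature. From the computation preceding the theorem, in the frame $\{\sigma,\ldots,\partial^{k}\sigma/\partial z^{k}\}$ the endomorphism part of $\mathcal{K}_{\mathcal{J}_{k}(\mathcal{L}_{f})}(z)$ is the $(k+1)\times(k+1)$ matrix $M(z)$ all of whose columns vanish except the last, which is the vector $v(z)=\big(b_1(z),\ldots,b_k(z),\mathcal{K}_{\det(\mathcal{J}_k\mathcal{L})}(z)\big)^{t}$. Hence $M(z)w=w_{k+1}\,v(z)$, so at every point where $v(z)\neq 0$ one has $\ker M(z)=\{w:w_{k+1}=0\}$, which in the chosen frame is precisely the fibre of $\mathcal{J}_{k-1}(\mathcal{L}_{f})$ at $z$. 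I would then argue that $v$ does not vanish on any nonempty open set — equivalently, $\mathcal{J}_{k}(\mathcal{L}_{f})$ is nowhere flat — using the $k$-nondegeneracy of $f$: concretely, $\mathcal{K}_{\det(\mathcal{J}_k\mathcal{L})}=-\partial_z\partial_{\bar z}\log\det\mathcal{J}_k(h)$ is nonvanishing because the Gram determinant $\det\mathcal{J}_k(h)$ of $\sigma,\partial\sigma/\partial z,\ldots,\partial^{k}\sigma/\partial z^{k}$ is strictly log-subharmonic, which is the analytic content of nondegeneracy (the case $k=0$ being the classical fact that $1$-nondegeneracy forces $\mathcal{K}_{\mathcal{L}_f}\neq0$). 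The entries of $M$ being real-analytic, $\{v\neq0\}$ is then a dense open subset of $\Omega_0$, and on it $\ker\mathcal{K}_{\mathcal{J}_{k}(\mathcal{L}_{f})}$ is a rank-$k$ distribution agreeing with $\mathcal{J}_{k-1}(\mathcal{L}_{f})$; since both are closed sub-bundles, the identification $\mathcal{J}_{k-1}(\mathcal{L}_{f})=\ker\mathcal{K}_{\mathcal{J}_{k}(\mathcal{L}_{f})}$ (off the flat locus, extended across it) holds on all of $\Omega_0$, and likewise for $\tilde f$.

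For the final step I would take $\Phi$ to be a local equivalence, i.e.\ a holomorphic, isometric bundle isomorphism from $\mathcal{J}_{k}(\mathcal{L}_{f})$ onto $\mathcal{J}_{k}(\mathcal{L}_{\tilde f})$ over some open set $U\subseteq\Omega_0$. Because the Chern connection of a Hermitian holomorphic bundle is determined by its metric and its holomorphic structure, $\Phi$ intertwines the Chern connections and hence the curvatures: $\mathcal{K}_{\mathcal{J}_{k}(\mathcal{L}_{\tilde f})}\,\Phi=\Phi\,\mathcal{K}_{\mathcal{J}_{k}(\mathcal{L}_{f})}$ as endomorphism-valued forms. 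Therefore $\Phi$ maps $\ker\mathcal{K}_{\mathcal{J}_{k}(\mathcal{L}_{f})}$ onto $\ker\mathcal{K}_{\mathcal{J}_{k}(\mathcal{L}_{\tilde f})}$, i.e.\ $\Phi\big(\mathcal{J}_{k-1}(\mathcal{L}_{f})\big)=\mathcal{J}_{k-1}(\mathcal{L}_{\tilde f})$ over $U$. The restriction $\Phi|_{\mathcal{J}_{k-1}(\mathcal{L}_{f})}$ is then holomorphic (restriction of a holomorphic map to a holomorphic sub-bundle), isometric (the metrics on the sub-bundles are the restrictions of $\mathcal{J}_{k}(h)$ and $\mathcal{J}_{k}(\tilde h)$, which $\Phi$ preserves), and a bijection onto $\mathcal{J}_{k-1}(\mathcal{L}_{\tilde f})$ — that is, a local equivalence, as required.

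The step I expect to be the main obstacle is the non-flatness claim in the second paragraph, i.e.\ that the last column of the curvature matrix does not vanish identically on any open set. The slick route is to prove $\mathcal{K}_{\det(\mathcal{J}_k\mathcal{L})}$ alone is nowhere zero, which reduces to strict log-subharmonicity of the Gram determinant of $\sigma,\partial\sigma/\partial z,\ldots,\partial^{k}\sigma/\partial z^{k}$; if one wishes to sidestep a detailed Gram-matrix estimate, it suffices — for the continuity argument above — to know only that the curvature is not identically zero on open subsets, and this is already immediate from nondegeneracy of $f$.
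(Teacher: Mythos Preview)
Your argument is correct and is, at its core, the same as the paper's: both use that in the jet frame the curvature of $\mathcal{J}_k(\mathcal{L}_f)$ has all columns zero except the last, and then exploit the intertwining relation $\phi\,\mathcal{K}_{\mathcal{J}_k(\mathcal{L}_f)}=\mathcal{K}_{\mathcal{J}_k(\mathcal{L}_{\tilde f})}\,\phi$ to force $\phi$ to be block upper--triangular with respect to the splitting $\mathcal{J}_{k-1}\subset\mathcal{J}_k$. The paper simply multiplies out $\phi\mathcal{K}$ and $\tilde{\mathcal{K}}\phi$ and reads off $\phi_{k,0}=\cdots=\phi_{k,k-1}=0$; you rephrase the same computation invariantly by observing that the span of the first $k$ frame elements is $\ker\mathcal{K}_{\mathcal{J}_k(\mathcal{L}_f)}$ and that an equivalence must carry kernels to kernels. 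So the route is not genuinely different, just packaged more conceptually.

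One remark on the point you flag as the main obstacle. Both arguments (yours explicitly, the paper's implicitly) need that the last column of the curvature is not identically zero; otherwise neither the matrix comparison nor the kernel identification says anything. You try to argue this via strict log-subharmonicity of $\det\mathcal{J}_k(h)$ and then pass to a dense open set. In fact no density argument is required: the formula proved just after the theorem in the paper,
\[
\mathcal{K}_{\det\mathcal{J}_k(\mathcal{L}_f)}(z)=\frac{\det\mathcal{J}_{k-1}(h)(z)\,\det\mathcal{J}_{k+1}(h)(z)}{\big(\det\mathcal{J}_k(h)(z)\big)^{2}}\,d\bar z\wedge dz,
\]
is strictly positive at \emph{every} $z$, since each factor is a Gram determinant of linearly independent vectors (this is exactly where nondegeneracy of $f$ enters). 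Hence the $(k{+}1,k{+}1)$ entry of the curvature never vanishes, $\ker\mathcal{K}_{\mathcal{J}_k(\mathcal{L}_f)}(z)=\mathcal{J}_{k-1}(\mathcal{L}_f)_z$ for all $z$, and the rest of your argument goes through without the real-analytic/closure step.
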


\begin{proof} Since $\mathcal{J}_k(\mathcal{L}_{f})$ and
$\mathcal{J}_k(\mathcal{L}_{\tilde f})$ are locally equivalent,
for each $z_0\in \Omega$, there exists a neighborhood $\Omega_0$
and a holomorphic bundle map $\phi\colon
\mathcal{J}_k(\mathcal{L}_{f})_{|\Omega_0}\rightarrow
\mathcal{J}_k(\mathcal{L}_{\tilde f})_{|\Omega_0}$ such that
$\phi$ is an isomorphism. Let
$\mathcal{J}_k(\sigma)=\{\sigma,\frac{\partial \sigma}{\partial
z},\frac {\partial^2 \sigma}{\partial
z^2},\ldots,\frac{\partial^{k}\sigma}{\partial z^k}\}$ and
$\mathcal{J}_k(\tilde{\sigma})=\{\tilde \sigma,\frac{\partial
\tilde \sigma}{\partial z},\frac {\partial^2 \tilde
\sigma}{\partial z^2},\ldots,\frac{\partial^{k}\tilde
\sigma}{\partial z^k}\}$ be frames for
$\mathcal{J}_k(\mathcal{L}_{f})$ and
$\mathcal{J}_k(\mathcal{L}_{\tilde f})$ over the open subset
$\Omega_0$ of $\Omega$ respectively.

Now \begin{eqnarray}\label{matrix of phi 1}
\phi(\tfrac{\partial^{j}\sigma}{\partial z^j}(z))=
\sum^{k}_{i=0}{\phi_{ij}(z)\tfrac{\partial^{i}\tilde
\sigma}{\partial z^i}(z)}.
\end{eqnarray}
So the matrix representing $\phi$ with respect to the two frames
$\mathcal{J}_k(\sigma)$ and $\mathcal{J}_k(\tilde{\sigma})$ is
\begin{eqnarray}
\phi(z)= \left(\begin{smallmatrix}
  \phi_{0,0}(z)  & \cdots & \phi_{0,k}(z) \\
  \vdots & \ddots & \vdots \\
  \phi_{k,0}(z)  & \cdots & \phi_{k,k}(z) \\
\end{smallmatrix}\right).
\end{eqnarray} Therefore we can write \begin{eqnarray}\label{matrix of phi
3}\big(\phi(\sigma(z)),\phi(\tfrac{\partial \sigma}{\partial
z}(z)),\ldots,\phi(\tfrac{\partial^{k}\sigma}{\partial
z^k}(z))\big)= \big(\tilde \sigma(z),\tfrac{\partial \tilde
\sigma}{\partial z}(z),\ldots,\tfrac{\partial^{k}\tilde
\sigma}{\partial z^k}(z)\big) \phi(z) .\end{eqnarray} But we know
that

\begin{eqnarray} \label{cur 1} \phi(z)
\mathcal{K}_{\mathcal{J}_{k}(\mathcal{L}_{f})}(z)=
\mathcal{K}_{\mathcal{J}_{k}(\mathcal{L}_{\tilde f})}(z) \phi(z).
\end{eqnarray} Now

 \begin{eqnarray}\label{cur 2}
&&\big(\phi(z) \mathcal{K}_{\mathcal{J}_{k}(\mathcal{L}_{f})}(z)\big)_{ij} \nonumber\\
&=&  \begin{cases} 0 &\mbox{if}
\,\, 0\leq i,j\leq k-1,\\ \sum_{l=0}^{k-1}b_{l+1}(z).\phi_{i,l}(z)+
  \mathcal{K}_{\det({\mathcal{J}_{k}(\mathcal{L}_{f})})}(z).\phi_{i,k}(z) d\overline{z}\wedge dz & \mbox{if}\,\, 0 \leq i\leq k, j=k . \end{cases}
\end{eqnarray}

 and
 \begin{eqnarray} \label{cur 3}
 \mathcal{K}_{\mathcal{J}_{k}(\mathcal{L}_{\tilde
f})}(z) \phi(z) =\left(\begin{smallmatrix}
  b_1(z).\phi_{k,0}(z)  & \cdots  & b_1(z).\phi_{k,k}(z) \\
  \vdots  & \ddots & \vdots \\
  b_{k-1}(z).\phi_{k,0}(z)
  & \cdots  &  b_{k-1}(z).\phi_{k,k}(z) \\
  \mathcal{K}_{\det(\mathcal{J}_k (\mathcal{L}_{\tilde f}))}(z).\phi_{k,0}(z)
& \cdots  & \mathcal{K}_{\det(\mathcal{J}_k (\mathcal{L}_{\tilde f}))}(z).\phi_{k,k}(z) \\
\end{smallmatrix}\right) d\overline{z}\wedge dz \end{eqnarray}
 Hence from equations \eqref{cur 1}, \eqref{cur 2} and \eqref{cur 3},
 it follows that
$$\phi_{k,0}(z)=\phi_{k,1}(z)=\cdots=\phi_{k,k-1}(z)=0.$$
So the bundle map $\phi$ has the form
\begin{eqnarray}\label{matrix of phi 4} \phi (z)=
\left(\begin{smallmatrix}
  \phi_{0,0}(z) & \phi_{0,1}(z) & \cdots  & \phi_{0,k}(z) \\
  \vdots & \vdots & \ddots &  \vdots \\
  \phi_{k-1,0}(z) & \phi_{k-1,1}(z) & \cdots &  \phi_{k-1,k}(z) \\
  0 & 0 & \cdots &  \phi_{k,k}(z) \\
\end{smallmatrix}\right) \end{eqnarray} with respect to the frames
$\mathcal{J}_k(\sigma)$ and ${\mathcal{J}}_k(\tilde{\sigma})$.
Finally from equations \eqref{matrix of phi 3} and \eqref{matrix
of phi 4}, we see that
$$\phi_{|\mathcal{J}_{k-1}(\mathcal{L}_{f})_{|\Omega_0}}:\mathcal{J}_{k-1}(\mathcal{L}_{f})_{|\Omega
_0}\to \mathcal{J}_{k-1}(\mathcal{L}_{\tilde f})_{|\Omega_0}.$$
Since $\phi$ is a bundle isomorphism, it follows that
$$\phi_{|\mathcal{J}_{k-1}(\mathcal{L}_{f})_{|\Omega_0}}:\mathcal{J}_{k-1}(\mathcal{L}_{f})_{|\Omega_0}\to
\mathcal{J}_{k-1}(\mathcal{L}_{\tilde f})_{|\Omega_0}$$ is also a
bundle isomorphism.
\end{proof}

\begin{cor} Let $\mathcal{L}_{f}$ and $\mathcal{L}_{\tilde f}$ be
Hermitian holomorphic  line bundles. Let
$\mathcal{J}_k(\mathcal{L}_{f})$ and
$\mathcal{J}_k(\mathcal{L}_{\tilde f})$ be the corresponding jet
bundles of rank $k+1$. The two jet bundles
$\mathcal{J}_k(\mathcal{L}_{f})$ and
$\mathcal{J}_k(\mathcal{L}_{\tilde f})$ are locally equivalent
 as Hermitian holomorphic
vector bundles if and only if the two line bundles
$\mathcal{L}_{f}$ and $\mathcal{L}_{\tilde f}$ are locally
equivalent as Hermitian holomorphic vector bundles.
\end{cor}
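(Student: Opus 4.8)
The plan is to obtain the corollary from Theorem \ref{theo eql} for one implication and from the functoriality of the jet construction for the other. For the ``only if'' direction, suppose $\mathcal{J}_k(\mathcal{L}_{f})$ is locally equivalent to $\mathcal{J}_k(\mathcal{L}_{\tilde f})$. A single application of Theorem \ref{theo eql} yields a local equivalence of $\mathcal{J}_{k-1}(\mathcal{L}_{f})$ and $\mathcal{J}_{k-1}(\mathcal{L}_{\tilde f})$, and iterating $k$ times---at each step shrinking, if necessary, the neighborhood of a fixed point $z_0\in\Omega$, which is harmless because $k$ is finite---one arrives at a local equivalence of $\mathcal{J}_{0}(\mathcal{L}_{f})$ and $\mathcal{J}_{0}(\mathcal{L}_{\tilde f})$. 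Since the jet construction with $k=0$ returns the bundle itself, $\mathcal{J}_{0}(\mathcal{L}_{f})=\mathcal{L}_{f}$ and $\mathcal{J}_{0}(\mathcal{L}_{\tilde f})=\mathcal{L}_{\tilde f}$, which is the asserted local equivalence of the two line bundles.

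For the ``if'' direction, suppose $\mathcal{L}_{f}$ and $\mathcal{L}_{\tilde f}$ are locally equivalent. Fix $z_0\in\Omega$ and choose a neighborhood $\Omega_0$, a frame $\sigma$ for $\mathcal{L}_{f}$ on $\Omega_0$, a frame $\tilde\sigma$ for $\mathcal{L}_{\tilde f}$ on $\Omega_0$, and a nowhere-vanishing holomorphic function $\psi$ on $\Omega_0$ such that $\sigma':=\psi\tilde\sigma$ is a frame for $\mathcal{L}_{\tilde f}$ whose metric matrix coincides with that of $\mathcal{L}_{f}$ in the frame $\sigma$; write $h:=\langle\sigma,\sigma\rangle=\langle\sigma',\sigma'\rangle$ on $\Omega_0$ (this is exactly what local equivalence of the two line bundles means, after replacing the frame $\tilde\sigma$ by $\psi\tilde\sigma$). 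Since the jet-metric matrix $\mathcal{J}_k(h)$ is manufactured from the scalar function $h$ alone by its defining formula, the matrix of the jet metric of $\mathcal{J}_k(\mathcal{L}_{f})$ relative to $\mathcal{J}_k(\sigma)$ and the matrix of the jet metric of $\mathcal{J}_k(\mathcal{L}_{\tilde f})$ relative to $\mathcal{J}_k(\sigma')$ are literally the same matrix $\mathcal{J}_k(h)$. Hence the bundle map $\mathcal{J}_k(\mathcal{L}_{f})\to\mathcal{J}_k(\mathcal{L}_{\tilde f})$ that is the identity with respect to the frames $\mathcal{J}_k(\sigma)$ and $\mathcal{J}_k(\sigma')$---equivalently, $\mathcal{J}_k$ applied to the original equivalence $\sigma\mapsto\psi\tilde\sigma$, well defined as a bundle map by the cocycle identity $\mathcal{J}_k(A\widetilde A)=\mathcal{J}_k(A)\mathcal{J}_k(\widetilde A)$---is a holomorphic bundle isomorphism preserving the jet metric, i.e.\ a local Hermitian holomorphic equivalence of $\mathcal{J}_k(\mathcal{L}_{f})$ and $\mathcal{J}_k(\mathcal{L}_{\tilde f})$.

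The entire mathematical weight of the statement is carried by Theorem \ref{theo eql}, so the corollary amounts to bookkeeping. The one step deserving a short argument---the closest thing to an obstacle---is the functoriality invoked in the ``if'' direction, namely that $\mathcal{J}_k$ sends a Hermitian holomorphic equivalence of line bundles to one of jet bundles. This is precisely the well-definedness of the form $\mathcal{J}_k(h)$ under a change of frame, i.e.\ the transformation identity \eqref{wd1} specialized to a $1\times 1$ holomorphic transition function; once \eqref{wd1} is quoted, both implications close at once.
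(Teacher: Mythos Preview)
Your proof is correct and follows essentially the same strategy as the paper: iterate Theorem~\ref{theo eql} to descend from $\mathcal{J}_k$ to $\mathcal{J}_0=\mathcal{L}$. Two small differences are worth noting. First, the paper exploits the full content of the \emph{proof} of Theorem~\ref{theo eql}, namely that the \emph{same} isometry $\phi$ restricts to an isomorphism $\mathcal{J}_{k-1}(\mathcal{L}_f)|_{\Omega_0}\to\mathcal{J}_{k-1}(\mathcal{L}_{\tilde f})|_{\Omega_0}$; hence no shrinking of $\Omega_0$ is ever required, and after $k$ restrictions one lands on an isometry of the line bundles over the original $\Omega_0$. Your version, which invokes only the \emph{statement} of the theorem and allows shrinking at each step, is of course also valid since $k$ is finite. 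Second, the paper omits the ``if'' direction entirely, treating it as evident; your functoriality argument via equation~\eqref{wd1} (specialized to a scalar transition function) is exactly the right justification for that omitted step.
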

\begin{proof} Suppose $\mathcal{J}_k(\mathcal{L}_{f})$ and
$\mathcal{J}_k(\mathcal{L}_{\tilde f})$ are locally equivalent.
Then for each $z_0\in \Omega$ there exists a neighborhood
$\Omega_0$ and a holomorphic map $\phi\colon
\mathcal{J}_k(\mathcal{L}_{f})_{|\Omega_0}\rightarrow
\mathcal{J}_k(\mathcal{L}_{\tilde f})_{|\Omega_0}$ such that
$\phi$ is an isomorphism.

Using Theorem \ref{theo eql},
$\phi_{|\mathcal{J}_{k-1}(\mathcal{L}_{f})_{|\Omega_0}}:\mathcal{J}_{k-1}(\mathcal{L}_{f})_{|\Omega
_0}\to \mathcal{J}_{k-1}(\mathcal{L}_{\tilde f})_{|\Omega_0}$ is
an isomorphism.
\smallskip
Since
$\phi_{|\mathcal{J}_{k-1}(\mathcal{L}_{f})_{|\Omega_0}}:\mathcal{J}_{k-1}(\mathcal{L}_{f})_{|\Omega
_0}\to \mathcal{J}_{k-1}(\mathcal{L}_{\tilde f})_{|\Omega_0}$ is
an isomorphism, by the same argument which is given in the proof
of the Theorem  \ref{theo eql}, it follows that
$$\phi_{|\mathcal{J}_{k-2}(\mathcal{L}_{f})_{|\Omega_0}}:\mathcal{J}_{k-2}(\mathcal{L}_{f})_{|\Omega
_0}\to \mathcal{J}_{k-2}(\mathcal{L}_{\tilde f})_{|\Omega_0}$$ is
an isomorphism. Repeating this argument, we see that $\phi$ is an
isomorphism from ${\mathcal{L}_{f}}_{|\Omega_0}$ to
${\mathcal{L}_{\tilde f}}_{|\Omega_0}$.
\end{proof}

 Let $A$ be an $n\times n$ matrix and $A_{\hat i,\hat j}$ be the
 $(n-1)\times (n-1)$ matrix which is obtained from $A$ by
 removing the $i^{\rm th}$ row and $j^{\rm th}$ column of the matrix
 $A$.

  \begin{lem} \label{lin lemma 1}Let $A$ be an $n\times n$ matrix and $B$
  be the
 $(n-2)\times(n-2)$ matrix which is obtained from $
 A$ by removing the last two rows and last two
 columns of $A$.
 Then
 $$\det(A_{\hat n,\hat n}) \det(A_{\widehat {n-1},\widehat {n-1}})-
 \det(A_{\hat n,\widehat {n-1}}) \det(A_{\widehat {n-1},\hat n})=
 \det(B) \det(A).$$
 \end{lem}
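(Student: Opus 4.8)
The identity is the classical Desnanot--Jacobi (``Lewis Carroll'') determinant identity, specialised to the pair of indices $\{n-1,n\}$. The plan is to prove it first under the additional assumption $\det B\neq 0$ by means of the Schur-complement formula for the determinant of a $2\times 2$ block matrix, and then to remove that assumption by a density argument, both sides of the claimed equality being polynomials in the entries of $A$.

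First I would split the rows and the columns of $A$ into the blocks $\{1,\dots,n-2\}$ and $\{n-1,n\}$ and write
\[
A=\left(\begin{smallmatrix} B & C\\ D & E\end{smallmatrix}\right),\qquad C=\bigl(C^{(n-1)}\ \ C^{(n)}\bigr),\quad D=\left(\begin{smallmatrix} D_{(n-1)}\\ D_{(n)}\end{smallmatrix}\right),\quad E=\left(\begin{smallmatrix} a_{n-1,n-1} & a_{n-1,n}\\ a_{n,n-1} & a_{n,n}\end{smallmatrix}\right),
\]
where $B$ is the given $(n-2)\times(n-2)$ matrix, $C^{(n-1)},C^{(n)}$ are columns and $D_{(n-1)},D_{(n)}$ are rows. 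Assuming $\det B\neq 0$, I would set $S=E-DB^{-1}C$, a $2\times 2$ matrix, so that $\det A=\det B\,\det S$. Deleting one of the last two rows and one of the last two columns of $A$ leaves an $(n-1)\times(n-1)$ matrix whose leading $(n-2)\times(n-2)$ block is still $B$, and the same Schur-complement formula identifies its determinant with $\det B$ times the appropriate scalar entry of $S$: deleting row $n$ (resp.\ $n-1$) leaves the first (resp.\ second) row of the $\{n-1,n\}$-block, and likewise for columns, so
\[
\det(A_{\hat n,\hat n})=\det B\,S_{11},\quad \det(A_{\widehat{n-1},\widehat{n-1}})=\det B\,S_{22},\quad \det(A_{\hat n,\widehat{n-1}})=\det B\,S_{12},\quad \det(A_{\widehat{n-1},\hat n})=\det B\,S_{21}.
\]

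Substituting these into the left-hand side of the lemma then gives
\[
\det(A_{\hat n,\hat n})\det(A_{\widehat{n-1},\widehat{n-1}})-\det(A_{\hat n,\widehat{n-1}})\det(A_{\widehat{n-1},\hat n})=(\det B)^{2}\bigl(S_{11}S_{22}-S_{12}S_{21}\bigr)=(\det B)^{2}\det S=\det B\,\det A,
\]
which is the assertion whenever $\det B\neq 0$. To finish, I would note that the set of $n\times n$ matrices $A$ with $\det B\neq 0$ is dense (perturb the top-left $(n-2)\times(n-2)$ block alone) and that both sides of the claimed identity are polynomial, hence continuous, functions of the entries of $A$; so the identity, holding on a dense set, holds everywhere.

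I do not expect a real obstacle: once the block decomposition is set up the computation is routine. The two points that need a little care are the index bookkeeping in the four Schur-complement formulas displayed above and the (standard) limiting argument that removes the hypothesis $\det B\neq 0$. As a backup I would keep the alternative derivation from Jacobi's theorem on the minors of the adjugate of $A$, applied to its $2\times 2$ minor on rows and columns $\{n-1,n\}$, again after reducing to the case of invertible $A$; but the Schur-complement route is cleaner because it needs nothing beyond the block-determinant formula.
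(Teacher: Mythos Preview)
Your proof is correct and follows essentially the same route as the paper: the paper also writes $A$ in the $2\times 2$ block form with $B$ in the top-left corner, applies the Schur-complement determinant formula to $A$ and to the four $(n-1)\times(n-1)$ submatrices when $B$ is invertible, and then disposes of the singular case by approximating $B$ with invertible matrices and invoking continuity of the determinant. The only cosmetic difference is that the paper writes out the scalar entries of your matrix $S$ explicitly rather than naming $S$.
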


 \begin{proof} {\sf Case(1):} suppose $B$ is invertible.
 Let $$A=
\left(\begin{smallmatrix}
  a_{1,1}  & \cdots & a_{1,n} \\
  \vdots  & \ddots &  \vdots \\
  a_{n,1} & \cdots &  a_{n,n} \\
\end{smallmatrix}\right) $$ and
$$
x_1=(a_{1,n-1}, a_{2,n-1},\ldots,a_{n-2,n-1})^{\rm tr} ,
x_2=(a_{1,n},a_{2,n},\ldots,a_{n-2,n})^{\rm tr}
$$
$$y_{1}=(a_{n-1,1}, a_{n-1,2}, \ldots, a_{n-1,n-2}) , y_2=(
  a_{n,1}, a_{n,2}, \ldots, a_{n,n-2}).$$
  Thus the matrix $A$ can be written in the form
$$A=
\begin{pmatrix}
  B & x_1 & x_2 \\
  y_1 & a_{n-1,n-1} & a_{n-1,n} \\
  y_2 & a_{n,n-1} & a_{n,n} \\
\end{pmatrix} .$$ In this notation, we have the following
equalities:

\begin{eqnarray}\label{det 1} \det(A_{\hat n,\hat n})&=& \det
\begin{pmatrix}
  B & x_1 \\
  y_1 & a_{n-1,n-1} \nonumber\\
\end{pmatrix}\\&&\nonumber\\
&=& \det(B)(a_{n-1,n-1}-y_1 B^{-1}x_1),
\end{eqnarray}
\begin{eqnarray}\label{det 2} \det(A_{\widehat{n-1},\widehat{n-1}})&=& \det
\begin{pmatrix}
  B & x_2 \\
  y_2 & a_{n,n} \nonumber\\
\end{pmatrix}\\&&\nonumber\\
&=& \det(B)(a_{n,n}-y_2 B^{-1}x_2), \end{eqnarray}
\begin{eqnarray}\label{det 3} \det(A_{\hat n,\widehat {n-1}})&=& \det
\begin{pmatrix}
  B & x_2 \\
  y_1 & a_{n-1,n} \nonumber\\
\end{pmatrix}\\&&\nonumber\\
&=& \det(B)(a_{n-1,n}-y_1 B^{-1}x_2), \end{eqnarray}
\begin{eqnarray}\label{det 4} \det(A_{\widehat{n-1},\hat n})&=& \det
\begin{pmatrix}
  B & x_1 \\
  y_2 & a_{n,n-1} \nonumber\\
\end{pmatrix}
\\&&\nonumber\\
&=& \det(B)(a_{n,n-1}-y_2 B^{-1}x_1), \end{eqnarray} and
\begin{eqnarray} \label{det 5} \lefteqn{\det(A)}\nonumber\\&=& \det(B)
\det\left\{
\begin{pmatrix}
  a_{n-1,n-1} & a_{n-1,n} \\
  a_{n,n-1} & a_{n,n} \\
\end{pmatrix}-
\begin{pmatrix}
  y_1 \\
  y_2 \\
\end{pmatrix}
\scriptstyle{B}^{-1}
\begin{pmatrix}
  x_1 & x_2 \\
\end{pmatrix}
\right\}\nonumber \\&&\nonumber\\
 &=&\det(B)\det
\begin{pmatrix}
  a_{n-1,n-1}-y_1 B^{-1}x_1 & a_{n-1,n}-y_1 B^{-1} x_2 \\
  a_{n,n-1}-y_2 B^{-1} x_1& a_{n,n}-y_2 B^{-1} x_2 \\
\end{pmatrix}%
 \nonumber\\&&\nonumber\\
&=&\det(B)\left\{(a_{n-1,n-1}-y_1
\scriptstyle{{B}^{-1}}x_1)(a_{n,n}-y_2 \scriptstyle{{B}^{-1}}
x_2)-(a_{n-1,n}-y_1 \scriptstyle{{B}^{-1}} x_2) (a_{n,n-1}-y_2
\scriptstyle{{B}^{-1}} x_1) \right\}.\end{eqnarray} From equation
\eqref{det 1},\eqref{det 2},\eqref{det 3},\eqref{det 4} and
\eqref{det 5}, it follows that

$$\det(A)=\det(B) \left\{\frac{\det(A_{\hat n,\hat n})\det(A_{\widehat{n-1},\widehat{n-1}})}{(\det B)^2}-
\frac{\det(A_{\widehat{n-1},\hat n}) \det(A_{\hat n,\widehat
{n-1}})}{(\det B)^2}\right\},$$ that is,
\begin{eqnarray}\label{det 6}
\det(A_{\hat n,\hat n})\det(A_{\widehat {n-1},\widehat{n-1}})-
\det(A_{\widehat{n-1},\hat n}) \det(A_{\hat n,\widehat {n-1}})=
\det(B) \det(A).
 \end{eqnarray}

\noindent{\sf Case(2):} Suppose $B$ is not invertible. Then there
exists a sequence of invertible matrices $B_m$ that approximate
$B$, that is,
  $\|B_m-B\|\to
0$, as $m\to \infty$. Let

$$A_m=
\begin{pmatrix}
  B_m & x_1 & x_2 \\
  y_1 & a_{n-1,n-1} & a_{n-1,n} \\
  y_2 & a_{n,n-1} & a_{n,n} \\
\end{pmatrix}$$
clearly $\|A_m-A\|\to 0$ as $m\to \infty$. From the proof of the
previous case, we have
$$\det\{(A_m)_{\hat n,\hat n}\}
\det\{(A_m)_{\widehat{n-1},\widehat{n-1}}\}-\det\{(A_m)_{\hat
n,\widehat{n-1}}\} \det\{(A_m)_{\widehat{n-1},\hat n}\}= \det(B_m)
\det(A_m) .$$
Since determinant is a continuous function, taking
$m \to \infty$, it follows that
$$\det(A_{\hat n,\hat n})\det(A_{\widehat {n-1},\widehat{n-1}})-
\det(A_{\widehat{n-1},\hat n}) \det(A_{\hat n,\widehat {n-1}})=
\det(B) \det(A).$$
\end{proof}

\begin{prop}The curvature of the determinant bundle
$\det\,\mathcal{J}_k(\mathcal{L}_{f})$ is given by the following
formula
 $$\mathcal{K}_{\det\mathcal{J}_k (\mathcal{L}_{f})}(z)=
 \frac{(\det\mathcal{J}_{k-1} h)(z) (\det\mathcal{J}_{k+1} h)(z)}
 {(\det \mathcal{J}_k h)^2(z)}\;\;d\overline z\wedge dz.$$
\end{prop}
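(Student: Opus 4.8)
Here is the plan. The main idea is to compute the metric on the determinant line bundle, reduce the statement to a scalar identity, and then recognise that identity as exactly what Lemma \ref{lin lemma 1} provides.

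First I would pin down the metric on $\det\mathcal{J}_k(\mathcal{L}_f)$. Fix a frame $\sigma$ for $\mathcal{L}_f$ over an open set $\Omega_0$. Then $\sigma\wedge\frac{\partial\sigma}{\partial z}\wedge\cdots\wedge\frac{\partial^{k}\sigma}{\partial z^{k}}$ is a frame for $\det\mathcal{J}_k(\mathcal{L}_f)$, and the metric it induces is the function $D_k(z):=\det\big(\mathcal{J}_k(h)(z)\big)$, because the inner product of two decomposable $(k+1)$-vectors is the determinant of the Gram matrix of their factors. Since $\mathcal{L}_f\in\mathfrak{G}_1(\Omega,\mathcal H)$ the map $f$ is nondegenerate, so $\mathcal{J}_k(h)(z)$ is positive definite and $D_k(z)>0$ on $\Omega_0$. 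Applying the curvature formula quoted above, specialised to a line bundle, we get
$$\mathcal{K}_{\det\mathcal{J}_k(\mathcal{L}_f)}(z)=\overline{\partial}\big(D_k(z)^{-1}\partial D_k(z)\big)=\overline{\partial}\partial\log D_k(z)=\Big(\tfrac{\partial^{2}}{\partial z\,\partial\overline{z}}\log D_k(z)\Big)\,d\overline{z}\wedge dz ,$$
so it suffices to prove the scalar identity $\tfrac{\partial^{2}}{\partial z\,\partial\overline{z}}\log D_k=\big(\det\mathcal{J}_{k-1}h\big)\big(\det\mathcal{J}_{k+1}h\big)\big/D_k^{2}$.

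Next I would express $\partial_z D_k$, $\partial_{\overline z}D_k$ and $\partial_z\partial_{\overline z}D_k$ as minors of the single matrix $A:=\mathcal{J}_{k+1}(h)$, which is $(k+2)\times(k+2)$ with rows and columns indexed by $1,\dots,k+2$ and $(i,j)$-entry $\partial_{\overline z}^{\,i-1}\partial_z^{\,j-1}h$. The key observation is that differentiating the $j$-th column of $\mathcal{J}_k(h)$ in $z$ produces the $(j{+}1)$-st column of $\mathcal{J}_{k+1}(h)$ truncated to its first $k+1$ rows, and likewise differentiating the $i$-th row in $\overline z$ produces the $(i{+}1)$-st row truncated to the first $k+1$ columns. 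Expanding the determinant by multilinearity in columns (resp. rows), every term but one has a repeated column (resp. row) and vanishes, which gives
$$\partial_z D_k=\det\!\big(A_{\widehat{k+2},\,\widehat{k+1}}\big),\qquad \partial_{\overline z}D_k=\det\!\big(A_{\widehat{k+1},\,\widehat{k+2}}\big),\qquad \partial_z\partial_{\overline z}D_k=\det\!\big(A_{\widehat{k+1},\,\widehat{k+1}}\big),$$
where $A_{\hat i,\hat j}$ is the notation of Lemma \ref{lin lemma 1}. Moreover $A_{\widehat{k+2},\widehat{k+2}}=\mathcal{J}_k(h)$, so $\det A_{\widehat{k+2},\widehat{k+2}}=D_k$, and the matrix obtained from $A$ by deleting its last two rows and last two columns is exactly $\mathcal{J}_{k-1}(h)$.

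Finally I would apply Lemma \ref{lin lemma 1} to $A=\mathcal{J}_{k+1}(h)$ with $n=k+2$. Substituting the minors identified above, it reads
$$D_k\cdot\partial_z\partial_{\overline z}D_k-\partial_z D_k\cdot\partial_{\overline z}D_k=\big(\det\mathcal{J}_{k-1}h\big)\big(\det\mathcal{J}_{k+1}h\big),$$
and dividing by $D_k^{2}$ identifies the left-hand side with $\partial_z\partial_{\overline z}\log D_k=\tfrac{\partial_z\partial_{\overline z}D_k}{D_k}-\tfrac{\partial_z D_k\,\partial_{\overline z}D_k}{D_k^{2}}$, which is the desired formula. (For $k=0$ one reads $\det\mathcal{J}_{-1}h\equiv1$, recovering the classical $\mathcal{K}_{\mathcal{L}_f}=\overline{\partial}\partial\log h$.) I expect the only genuinely delicate point to be the second step: keeping precise track of which rows and columns are deleted after each differentiation, and matching them with the four minors that occur in Lemma \ref{lin lemma 1}; the rest is the line-bundle curvature formula together with a single invocation of that lemma.
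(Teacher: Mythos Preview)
Your proposal is correct and follows essentially the same route as the paper's proof: you write the curvature of $\det\mathcal{J}_k(\mathcal{L}_f)$ via the line-bundle formula, identify $\partial_z D_k$, $\partial_{\overline z}D_k$, $\partial_z\partial_{\overline z}D_k$ and $D_k$ itself with the four corner minors of $\mathcal{J}_{k+1}(h)$, and then invoke Lemma~\ref{lin lemma 1}. The only extra content in your write-up is the explicit multilinearity argument explaining why those derivatives equal those particular minors, which the paper states without justification.
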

\begin{proof}
The curvature of the determinant bundle
$\det(\mathcal{J}_k(\mathcal{L}_{f}))$ is
 $$\mathcal{K}_{\det\mathcal{J}_k (\mathcal{L}_{f})}(z)=
 \frac{(\det\mathcal{J}_k h)(z) (\frac{\partial^2}{\partial z\partial \overline z}\det\mathcal{J}_k h)(z)-
 (\frac{\partial}{\partial \overline z} \det\mathcal{J}_k h)(z)
 (\frac{\partial}{\partial z}
 \det\mathcal{J}_k h)(z)}{(\det \mathcal{J}_k h)^2(z)}\;\; d\overline z\wedge
 dz.$$
Here$$\mathcal{J}_k h=
\big(\!\big(\tfrac{\partial^{i+j}}{\partial\overline {z}^i
\partial z^j}h\big)\!\big)_{i,j=0}^{k}\;\;
\mbox{and}\;\; \mathcal{J}_{k+1} h=
\big(\!\big(\tfrac{\partial^{i+j}}{\partial\overline {z}^i
\partial z^j}h\big)\!\big)_{i,j=0}^{k+1}.
$$
Now, we have \begin{eqnarray} \label{matrix 1}
\frac{\partial}{\partial z}(\det\mathcal{J}_k h)
 = \det((\mathcal{J}_{k+1}h)_{\widehat{k+2},\widehat{k+1}}),
\end{eqnarray}
\begin{eqnarray} \label{matrix 2}
\frac{\partial}{\partial \overline z}(\det\mathcal{J}_k h)
 = \det((\mathcal{J}_{k+1}h)_{\widehat{k+1},\widehat{k+2}}),
\end{eqnarray}
and
\begin{eqnarray} \label{matrix 3}
\frac{\partial^2}{\partial\overline {z}\partial
z}(\det\mathcal{J}_k h) =
\det((\mathcal{J}_{k+1}h)_{\widehat{k+1},\widehat{k+1}}),
\end{eqnarray}
Finally, note that
\begin{eqnarray}\label{matrix 4} \det\mathcal{J}_k h= \det
((\mathcal{J}_{k+1}h)_{\widehat{k+2},\widehat{k+2}}).
\end{eqnarray} By Lemma \ref{lin lemma 1}, we obtain
\begin{eqnarray} \label{matrix 5}
\det (\mathcal{J}_{k-1}h) \det (\mathcal{J}_{k+1}h) &=&
{\det}({\!(\mathcal{J}_{k+1}h)}_{\widehat{k+2},\widehat{k+2}})
\det (\!(\mathcal{J}_{k+1}h)_{\widehat{k+1},\widehat{k+1}})
\nonumber\\&&~~~~~~~ - \det
(\!(\mathcal{J}_{k+1}h)_{\widehat{k+2},\widehat{k+1}}) \det
(\!(\mathcal{J}_{k+1}h)_{\widehat{k+1},\widehat{k+2}}).
\end{eqnarray} From equations \eqref{matrix 1}, \eqref{matrix
2}, \eqref{matrix 3}, \eqref{matrix 4} and \eqref{matrix 5}, it
follows that
\begin{eqnarray*}\lefteqn{(\det\mathcal{J}_{k-1} h)(z)
(\det\mathcal{J}_{k+1}h)(z)}~~~~~~~~~~~~~~~~~~~~~\\&=&
(\det\mathcal{J}_k h)(z) (\tfrac{\partial^2}{\partial z\partial
\bar z}\det\mathcal{J}_k h)(z)
 (\tfrac{\partial}{\partial \bar z} \det\mathcal{J}_k h)(z)
 (\tfrac{\partial}{\partial z}
 \det\mathcal{J}_k h)(z).\end{eqnarray*}
Hence
$$\mathcal{K}_{\det\mathcal{J}_k (\mathcal{L}_{f})}(z)=
 \frac{(\det\mathcal{J}_{k-1} h)(z) (\det\mathcal{J}_{k+1} h)(z)}
 {(\det \mathcal{J}_k h)^2(z)}\;\;d\overline z\wedge dz.$$
\end{proof}
\begin{cor} Let $\mathcal{L}_{f}$ and $\mathcal{L}_{\tilde f}$ be
  Hermitian holomorphic  line bundles over a domain
 $\Omega \subset \mathbb{C}$. The following statements are
 equivalent:
 \begin{enumerate}
\item[(1)]
 $\det\mathcal{J}_k (\mathcal{L}_{f})$
 is locally equivalent to $\det\mathcal{J}_k (\mathcal{L}_{\tilde f})$ and
$\det\mathcal{J}_{k+1}( \mathcal{L}_{f})$ is locally equivalent to
$\det\mathcal{J}_{k+1} (\mathcal{L}_{\tilde f})$, for some $k\in
\mathbb{N}$ \item[(2)] $\mathcal{L}_{f}$ is locally equivalent to
$\mathcal{L}_{\tilde f}$. \end{enumerate}
 \end{cor}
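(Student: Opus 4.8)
The strategy is to translate both statements into a single condition on the logarithms of the metrics. Fix $z_0\in\Omega$ and a simply connected neighbourhood $\Omega_0$ on which holomorphic frames $\sigma$ of $\mathcal{L}_f$ and $\widetilde\sigma$ of $\mathcal{L}_{\widetilde f}$ exist; put $h=\langle\sigma,\sigma\rangle$, $\widetilde h=\langle\widetilde\sigma,\widetilde\sigma\rangle$, and, for $m\ge 0$,
$$D_m=\det\mathcal{J}_m h,\qquad \widetilde D_m=\det\mathcal{J}_m\widetilde h,\qquad v_m=\log\frac{D_m}{\widetilde D_m}\ \ \text{on }\Omega_0 .$$
Since $\mathcal{L}_f,\mathcal{L}_{\widetilde f}\in\mathfrak{G}_1(\Omega,\mathcal{H})$, the forms $\mathcal{J}_m h$ are positive definite, so $D_m,\widetilde D_m>0$ and $D_m$ (resp.\ $\widetilde D_m$) is a genuine metric for the line bundle $\det\mathcal{J}_m(\mathcal{L}_f)$ (resp.\ $\det\mathcal{J}_m(\mathcal{L}_{\widetilde f})$). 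Because the curvature is a complete invariant for Hermitian holomorphic line bundles (as recalled in the introduction), $\det\mathcal{J}_m(\mathcal{L}_f)$ is locally equivalent to $\det\mathcal{J}_m(\mathcal{L}_{\widetilde f})$ if and only if $\partial^2 v_m/\partial z\,\partial\bar z\equiv 0$, i.e.\ if and only if $v_m$ is harmonic; and since $\det\mathcal{J}_0(\mathcal{L}_f)=\mathcal{L}_f$, statement $(2)$ is exactly the harmonicity of $v_0$. So the corollary reduces to the assertion that $v_k$ and $v_{k+1}$ are harmonic $\Longleftrightarrow$ $v_0$ is harmonic.

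For the substantive direction $(1)\Rightarrow(2)$ I would run a descending induction driven by the previous Proposition, which gives
$$\frac{\partial^2\log D_m}{\partial z\,\partial\bar z}=\frac{D_{m-1}D_{m+1}}{D_m^{2}}\qquad(m\ge 1),$$
and the same identity for $\widetilde D_m$. Suppose $v_m$ is harmonic for some $m$ with $1\le m\le k$. Subtracting the two identities and using $\partial^2 v_m/\partial z\,\partial\bar z\equiv 0$ gives $D_{m-1}D_{m+1}/D_m^{2}=\widetilde D_{m-1}\widetilde D_{m+1}/\widetilde D_m^{2}$; all the determinants are strictly positive, so taking logarithms produces the discrete Laplace relation $v_{m-1}-2v_m+v_{m+1}=0$, that is $v_{m-1}=2v_m-v_{m+1}$. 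Now feed in hypothesis $(1)$: $v_k$ harmonic yields $v_{k-1}=2v_k-v_{k+1}$, harmonic as a linear combination of harmonic functions; then $v_{k-1}$ harmonic yields $v_{k-2}=2v_{k-1}-v_k$ harmonic; iterating down to $m=1$ we reach $v_0=2v_1-v_2$, which is harmonic. By the first paragraph this is precisely $(2)$. (The case $k=0$ is vacuous, as $\det\mathcal{J}_0$ is the identity; note also that the two consecutive jet levels are genuinely needed, since a single relation $v_{m-1}-2v_m+v_{m+1}=0$ involves two adjacent unknowns and cannot propagate by itself.)

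For $(2)\Rightarrow(1)$ the quickest route is the Corollary following Theorem~\ref{theo eql}: $\mathcal{L}_f\sim\mathcal{L}_{\widetilde f}$ forces $\mathcal{J}_m(\mathcal{L}_f)\sim\mathcal{J}_m(\mathcal{L}_{\widetilde f})$ for every $m$, and applying $\det$ to an isometric holomorphic bundle isomorphism gives $\det\mathcal{J}_m(\mathcal{L}_f)\sim\det\mathcal{J}_m(\mathcal{L}_{\widetilde f})$, in particular for $m=k$ and $m=k+1$. Alternatively, and self-contained: locally $h=|g|^{2}\widetilde h$ for some holomorphic nowhere-vanishing $g$, so the frame $g^{-1}\sigma=\sigma\cdot(g^{-1})$ of $\mathcal{L}_f$ has metric $|g|^{-2}h=\widetilde h$; the transformation law $\mathcal{J}_m(h)(\mathcal{J}_m(\widetilde\sigma))=\mathcal{J}_m(A)^{*}\{\mathcal{J}_m(h)(\mathcal{J}_m(\sigma))\}\mathcal{J}_m(A)$ applied with $A=(g^{-1})$, together with $\det\mathcal{J}_m(g^{-1})=g^{-(m+1)}$ (the matrix $\mathcal{J}_m(g^{-1})$ being triangular with $g^{-1}$ on the diagonal), gives $\widetilde D_m=|g|^{-2(m+1)}D_m$, so that $v_m=(m+1)\log|g|^{2}$ is harmonic for every $m$. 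Granting the previous Proposition, the only genuine work lies in the bookkeeping of the descending induction in the second paragraph; the essential analytic point throughout is that the strict positivity of the determinants $D_m$ is what permits the curvature identity to be logarithmically linearised.
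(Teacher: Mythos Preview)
The paper states this corollary without proof, immediately after the Proposition giving
$\mathcal{K}_{\det\mathcal{J}_m(\mathcal{L}_f)}=D_{m-1}D_{m+1}/D_m^{2}\,d\bar z\wedge dz$, so your write-up is in fact supplying the omitted argument.  Your proof is correct and is precisely the argument the placement of the corollary invites: translate local equivalence of the determinant line bundles into harmonicity of $v_m=\log(D_m/\widetilde D_m)$, use the Proposition to obtain the discrete recursion $v_{m-1}-2v_m+v_{m+1}=0$ whenever $v_m$ is harmonic, and then step down from the pair $(v_k,v_{k+1})$ to $v_0$.  The converse direction via the transformation law $\mathcal{J}_m(h)(\mathcal{J}_m(\widetilde\sigma))=\mathcal{J}_m(A)^{*}\mathcal{J}_m(h)(\mathcal{J}_m(\sigma))\mathcal{J}_m(A)$ with $A=g^{-1}$ and $\det\mathcal{J}_m(g^{-1})=g^{-(m+1)}$ is clean and self-contained.

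Two minor remarks.  First, your parenthetical ``the case $k=0$ is vacuous'' is not quite the right word: for $k=0$ one has $\det\mathcal{J}_0(\mathcal{L}_f)=\mathcal{L}_f$, so hypothesis~(1) already contains~(2) and the implication is trivial rather than vacuous.  Second, the step ``$v_m$ harmonic $\Rightarrow D_{m-1}D_{m+1}/D_m^{2}=\widetilde D_{m-1}\widetilde D_{m+1}/\widetilde D_m^{2}$'' is using that the Proposition identifies these ratios with the \emph{coefficients} of $\partial\bar\partial\log D_m$ and $\partial\bar\partial\log\widetilde D_m$; it might be worth saying explicitly that you are equating the $d\bar z\wedge dz$-coefficients of the two curvature forms, since the Proposition is stated at the level of forms.
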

\section{Rank $n$-Vector Bundles}
We first recall some well known facts from linear algebra.

\begin{lem}
Let $A, B, C$ and $D$ be matrices of size $n\times n, n\times m,
m\times n$ and $m\times m$ respectively.

\begin{enumerate}
\item[(i)]\label{l23}\cite[pp. 138]{rao} If $A, D$ and $D- C
A^{-1} B$ are invertible, then $
\begin{pmatrix}
  A & B \\
  C & D \\
\end{pmatrix}$ is invertible and $$
\begin{pmatrix}
  A & B \\
  C & D \\
\end{pmatrix}^{-1}=
\begin{pmatrix}
  (A-BD^{-1}C)^{-1} & -A^{-1}B(D-CA^{-1}B)^{-1} \\
  -D^{-1}C(A-BD^{-1}C)^{-1} & (D-CA^{-1}B)^{-1} \\
\end{pmatrix}.
$$
\end{enumerate}
\item[(ii)]\label{l24}\cite[pp. 246]{rao}
If $A$ is invertible then
$$\det
\begin{pmatrix}
  A & B \\
  C & D \\
\end{pmatrix}=
\det(A) \det(D-CA^{-1}B)$$

\item[(iii)] \label{l25}\cite[pp. 247]{rao} If $D$ is invertible then
$$\det
\begin{pmatrix}
  A & B \\
  C & D \\
\end{pmatrix}=
\det(D) \det(A-BD^{-1}C).$$

\end{lem}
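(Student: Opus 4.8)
The three assertions are the standard Schur complement identities, and I would prove them by block Gaussian elimination. For (ii), invertibility of $A$ gives the factorization
\[
\begin{pmatrix} I & 0 \\ -CA^{-1} & I \end{pmatrix}\begin{pmatrix} A & B \\ C & D \end{pmatrix}=\begin{pmatrix} A & B \\ 0 & D-CA^{-1}B \end{pmatrix}.
\]
The left factor is unipotent block lower triangular, hence of determinant $1$, while the right side is block upper triangular, so taking determinants yields $\det\begin{pmatrix} A & B \\ C & D \end{pmatrix}=\det(A)\det(D-CA^{-1}B)$. For (iii) one argues symmetrically: when $D$ is invertible, right multiplication by $\begin{pmatrix} I & 0 \\ -D^{-1}C & I \end{pmatrix}$ clears the $(2,1)$ block, producing $\begin{pmatrix} A-BD^{-1}C & B \\ 0 & D \end{pmatrix}$ of determinant $\det(A-BD^{-1}C)\det(D)$.

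For (i), I would refine the elimination used in (ii) to the full block $LDU$ decomposition
\[
\begin{pmatrix} A & B \\ C & D \end{pmatrix}=\begin{pmatrix} I & 0 \\ CA^{-1} & I \end{pmatrix}\begin{pmatrix} A & 0 \\ 0 & S \end{pmatrix}\begin{pmatrix} I & A^{-1}B \\ 0 & I \end{pmatrix},\qquad S:=D-CA^{-1}B,
\]
which is legitimate since $A$ is invertible. As $A$ and (by hypothesis) $S$ are invertible, all three factors are invertible, so $M:=\begin{pmatrix} A & B \\ C & D \end{pmatrix}$ is invertible, and inverting the factors in reverse order gives
\[
M^{-1}=\begin{pmatrix} I & -A^{-1}B \\ 0 & I \end{pmatrix}\begin{pmatrix} A^{-1} & 0 \\ 0 & S^{-1} \end{pmatrix}\begin{pmatrix} I & 0 \\ -CA^{-1} & I \end{pmatrix}=\begin{pmatrix} A^{-1}+A^{-1}BS^{-1}CA^{-1} & -A^{-1}BS^{-1} \\ -S^{-1}CA^{-1} & S^{-1} \end{pmatrix}.
\]
Since $S=D-CA^{-1}B$, the $(1,2)$ and $(2,2)$ blocks already coincide with those in the statement.

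The only remaining point, and the one step that is not pure bookkeeping, is to reconcile the first column with the form $\big((A-BD^{-1}C)^{-1},\,-D^{-1}C(A-BD^{-1}C)^{-1}\big)^{\mathrm{tr}}$ in the statement. First, applying part (iii) to $M$ (here $D$ is invertible) together with $\det(M)\neq 0$ gives $\det(A-BD^{-1}C)=\det(M)/\det(D)\neq 0$, so $A-BD^{-1}C$ is invertible and the stated expression is meaningful. The identities $A^{-1}+A^{-1}BS^{-1}CA^{-1}=(A-BD^{-1}C)^{-1}$ and $S^{-1}CA^{-1}=D^{-1}C(A-BD^{-1}C)^{-1}$ are the Sherman--Morrison--Woodbury formula and the ``push-through'' identity $U(I-VU)^{-1}=(I-UV)^{-1}U$ applied with $U=D^{-1}C$, $V=A^{-1}B$; I expect this to be the main (and in fact only) obstacle, and the cleanest way around it is to forgo proving these in isolation and instead verify directly that $M$ times the matrix displayed in the statement equals the identity on both sides, each block collapsing after substituting $D=S+CA^{-1}B$. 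As the paper notes, this is a standard fact of linear algebra \cite{rao}, so one may equally well just cite it; parts (ii) and (iii), which are what the determinant computations later in the paper actually invoke, require no such care.
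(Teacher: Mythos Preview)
Your argument is correct and is the standard block-elimination proof of the Schur complement identities. The paper, however, does not prove this lemma at all: it is stated as a recollection of ``well known facts from linear algebra,'' with each part simply attributed to the cited textbook \cite{rao}. So there is nothing to compare against on the level of proof strategy; you have supplied a proof where the paper is content to cite one. Your closing remark that one may equally well just quote \cite{rao} is in fact exactly what the paper does.
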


\begin{lem}\label{quo 1}\cite[pp. 240]{cd}
If $V$ is a proper, non-zero subspace of an inner product space
$W$ then it induces an inner product on the quotient $W/V$ by
\begin{eqnarray*}([w_1],[w_2])&=& ||v_1\wedge\ldots\wedge
v_n||^{-2}(v_1\wedge\ldots\wedge v_n\wedge
w_1,v_1\wedge\ldots\wedge v_n\wedge w_2)\end{eqnarray*}  where
$[w_1], [w_2]$ denote the equivalence classes of $w_1$ and $w_2$
respectively in $W/V$ and $\{v_1,\ldots,v_n\}$ is a basis for $V$.
\end{lem}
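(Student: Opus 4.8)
The plan is to read the right-hand side as the pullback of a (rescaled) inner product on an exterior power along an injective linear map; once the formula is set up this way, almost nothing needs to be checked. Write $n=\dim V$ (so properness of $V$ is what guarantees $W/V\neq\{0\}$), fix the basis $\{v_1,\dots,v_n\}$ of $V$, and set $\omega=v_1\wedge\cdots\wedge v_n$. Equip $\Lambda^{n+1}W$ with the standard inner product determined by $\langle a_1\wedge\cdots\wedge a_{n+1},\,b_1\wedge\cdots\wedge b_{n+1}\rangle=\det\big((\langle a_i,b_j\rangle)_{i,j}\big)$; since only $n+1$ vectors are ever wedged at once, one may restrict attention to the finite-dimensional span of the vectors in play, so the possible infinite-dimensionality of $W$ causes no difficulty. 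Consider the linear map $\Phi_0\colon W\to\Lambda^{n+1}W$, $\Phi_0(w)=\omega\wedge w$. The crucial observation is that $\Phi_0(w)=0$ if and only if $v_1,\dots,v_n,w$ are linearly dependent, which, since $v_1,\dots,v_n$ are independent, happens exactly when $w\in V$. Hence $\ker\Phi_0=V$, so $\Phi_0$ descends to an injective linear map $\Phi\colon W/V\hookrightarrow\Lambda^{n+1}W$, $\Phi([w])=\omega\wedge w$; in particular the right-hand side of the asserted formula does not depend on the choice of representatives $w_1,w_2$.

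With this in hand, the proof that $([w_1],[w_2]):=\|\omega\|^{-2}\langle\Phi([w_1]),\Phi([w_2])\rangle$ is an inner product is essentially formal. Since $v_1,\dots,v_n$ are independent, $\omega\neq 0$, so $\|\omega\|^2>0$ and the prefactor makes sense; sesquilinearity and conjugate symmetry are inherited directly from those of the inner product on $\Lambda^{n+1}W$ together with the linearity of $\Phi$. For positive definiteness, $([w],[w])=\|\omega\|^{-2}\,\|\omega\wedge w\|^2\ge 0$, and this vanishes if and only if $\omega\wedge w=0$, i.e.\ if and only if $[w]=0$ in $W/V$, again by the kernel computation for $\Phi_0$.

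It remains to check that the value is independent of the chosen basis of $V$. If $\{v_1',\dots,v_n'\}$ is another basis and $T\in GL_n(\mathbb{C})$ is the change-of-basis matrix, then $v_1'\wedge\cdots\wedge v_n'=(\det T)\,\omega$, so replacing $\omega$ by $\omega'=v_1'\wedge\cdots\wedge v_n'$ multiplies each $\omega'\wedge w_i$ by $\det T$ and multiplies $\|\omega'\|^2$ by $|\det T|^2$; the factor $|\det T|^2$ then cancels between numerator and denominator, leaving the value unchanged. As a sanity check one may instead use the orthogonal decomposition $W=V\oplus V^{\perp}$ (available because $\dim V<\infty$): for $w\perp V$ the Gram matrix of $v_1,\dots,v_n,w$ is block diagonal with blocks the Gram matrix of $v_1,\dots,v_n$ and $(\|w\|^2)$, so the formula returns $\|w^{\perp}\|^2$ where $w=P_V w+w^{\perp}$, which is exactly the inner product transported from $V^{\perp}\cong W/V$.

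The only genuinely non-formal ingredient is the identity $\ker\Phi_0=V$, that is, the standard fact that a wedge product of vectors vanishes precisely when the vectors are linearly dependent; I expect this to be the one point that needs a word of justification (extend a maximal independent subset to a basis, or use that the top exterior power of a finite-dimensional space is one-dimensional). Everything else --- the two well-definedness checks and the inner-product axioms --- is bookkeeping once the right-hand side is recognized as $\Phi$ pulling back a positive multiple of the natural inner product on $\Lambda^{n+1}W$.
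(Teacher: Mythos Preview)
Your proof is correct. The paper itself does not supply a proof of this lemma; it is stated with a citation to \cite[pp.~240]{cd} and then used without further argument, so there is nothing in the paper to compare against. Your approach---recognizing the right-hand side as the pullback of (a positive multiple of) the inner product on $\Lambda^{n+1}W$ along the injective linear map $[w]\mapsto\omega\wedge w$ induced by $\ker(\omega\wedge\cdot)=V$---is the natural one, and the verifications of well-definedness on cosets, the inner-product axioms, and independence of the basis of $V$ are all handled correctly.
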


\begin{lem}\label{quo 2}
Let $W$ be an inner product space and let $V$ be a subspace of
$W$. Let $\{e_1,\ldots,e_r\}$ be a basis of $V$ and
$\{e_1,\ldots,e_r,e_{r+1},\ldots,e_n\}$ be a  basis of $W$
extending the basis of $W$. Suppose $$\sigma_i=
e_1\wedge\ldots\wedge e_r\wedge e_i,\;\; r+1\leq i\leq n$$ and

$\begin{array}{cc}
  A=\big(\!\big(\langle e_i,e_j\rangle\big)\!\big)_{1\leq i,j\leq r},&
  B=\big(\!\big(\langle e_i,e_j\rangle\big)\!\big)_{r+1\leq i\leq n,1\leq j\leq r},\\
 C=\big(\!\big(\langle e_i,e_j\rangle\big)\!\big)_{1\leq i\leq r,\, r+1\leq j\leq n}, &
  D=\big(\!\big(\langle e_i,e_j\rangle\big)\!\big)_{r+1\leq i,j\leq n},\\
\end{array}$ \\ $$\textbf{A}_{\sigma}=\big(\!\big(\langle
\sigma_i,\sigma_j\rangle\big)\!\big)_{r+1\leq i,j\leq n}.$$ Then
$$\det\big(\!\big(\langle
e_i,e_j\rangle\big)\!\big)_{1\leq i,j\leq n}=\det
\begin{pmatrix}
  A & B \\
  C & D \\
\end{pmatrix}
= \frac{\det (\textbf{A}_{\sigma})}{(\det A)^{n-r-1}}.$$

\end{lem}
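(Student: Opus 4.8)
The plan is to reduce the identity to the Schur--complement determinant formula recorded in Lemma~\ref{l24}(ii), applied twice, together with the classical Gram identity for exterior powers. The first equality in the statement is merely the partition of the Gram matrix $\big(\!\big(\langle e_i,e_j\rangle\big)\!\big)_{1\le i,j\le n}$ into the four blocks $A$, $B$, $C$, $D$, with $A$ in the upper-left and $D$ in the lower-right corner, so there is nothing to prove there. One should also observe that $\det A\neq 0$: since $\{e_1,\dots,e_r\}$ is a basis of $V$ and the form on $W$ is a genuine (hence positive definite) inner product, $A$ is positive definite and therefore invertible; in any case, were $\det A$ zero the asserted formula would be vacuous, its right-hand side already involving $(\det A)^{-(n-r-1)}$.

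First I would compute $\langle\sigma_i,\sigma_j\rangle$ for $r+1\le i,j\le n$. By the Gram identity $\langle u_1\wedge\cdots\wedge u_p,\ v_1\wedge\cdots\wedge v_p\rangle=\det\big(\!\big(\langle u_a,v_b\rangle\big)\!\big)_{a,b=1}^{p}$ --- the same identity underlying the quotient inner product of Lemma~\ref{quo 1} --- one finds that $\langle\sigma_i,\sigma_j\rangle=\langle e_1\wedge\cdots\wedge e_r\wedge e_i,\ e_1\wedge\cdots\wedge e_r\wedge e_j\rangle$ equals the determinant of the bordered $(r+1)\times(r+1)$ matrix whose upper-left $r\times r$ block is $A$, whose last column is the $j$-th column of $C$, whose last row is the $i$-th row of $B$, and whose $(r+1,r+1)$ entry is $\langle e_i,e_j\rangle=D_{ij}$. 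Applying Lemma~\ref{l24}(ii) to this bordered matrix (permissible since $A$ is invertible) gives $\langle\sigma_i,\sigma_j\rangle=\det(A)\,(D-BA^{-1}C)_{ij}$. Taken over all such $i,j$, this says precisely that $\textbf{A}_{\sigma}=\det(A)\,(D-BA^{-1}C)$ as $(n-r)\times(n-r)$ matrices, and hence, by factoring the scalar $\det(A)$ out of each of the $n-r$ rows, $\det(\textbf{A}_{\sigma})=(\det A)^{\,n-r}\det(D-BA^{-1}C)$.

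On the other hand, Lemma~\ref{l24}(ii) applied to the full Gram matrix $\big(\!\big(\langle e_i,e_j\rangle\big)\!\big)_{1\le i,j\le n}$ (again with $A$ invertible) gives $\det\big(\!\big(\langle e_i,e_j\rangle\big)\!\big)_{1\le i,j\le n}=\det(A)\det(D-BA^{-1}C)$. Eliminating $\det(D-BA^{-1}C)$ between the two displays yields $\det(\textbf{A}_{\sigma})=(\det A)^{\,n-r-1}\det\big(\!\big(\langle e_i,e_j\rangle\big)\!\big)_{1\le i,j\le n}$, which is the asserted formula after dividing by $(\det A)^{\,n-r-1}$. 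I do not expect a serious obstacle here; the one place to be careful is the bookkeeping in the middle step --- checking that the $(r+1)$-fold wedge pairing of $\{e_1,\dots,e_r,e_i\}$ with $\{e_1,\dots,e_r,e_j\}$ really does have the bordered block shape that feeds into Lemma~\ref{l24}(ii), and that factoring $\det A$ out of every entry of $\textbf{A}_{\sigma}$ contributes $(\det A)^{\,n-r}$, not $(\det A)^{\,n-r-1}$, to $\det(\textbf{A}_{\sigma})$. It is exactly this one-power discrepancy against the Schur--complement expansion of the full Gram determinant that produces the exponent $n-r-1$; and when $n=r+1$ the formula degenerates consistently to $\det(\text{Gram})=\det(\textbf{A}_{\sigma})$, since $(\det A)^{0}=1$.
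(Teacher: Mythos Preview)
Your proof is correct and follows essentially the same route as the paper: compute each $\langle\sigma_i,\sigma_j\rangle$ as the determinant of a bordered $(r+1)\times(r+1)$ Gram matrix, apply the Schur-complement formula to extract a factor $\det A$ from every entry of $\textbf{A}_{\sigma}$, and compare with the Schur expansion of the full $n\times n$ Gram determinant. The only discrepancy is that you write the Schur complement as $D-BA^{-1}C$ while the paper writes $D-CA^{-1}B$; given the block dimensions declared in the statement your version is in fact the one that parses, so this is a harmless labeling issue rather than a mathematical difference.
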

\begin{proof}
Suppose $x_i=(\langle e_1,e_i\rangle,\ldots,\langle
e_r,e_i\rangle)$ and $y_i= \bar{x}_i^{\rm tr},$ $r+1\leq i\leq n$.
\begin{eqnarray*}
\langle \sigma_i,\sigma_j\rangle &=& \det
\begin{pmatrix}
  A & y_i \\
  x_j & \langle e_i,e_j\rangle \\
\end{pmatrix}\\
 &=& \det (A)(\langle e_i,e_j\rangle- x_j A^{-1}y_i).
\end{eqnarray*}
Next, note that
\begin{eqnarray*}
\det\big(\!\big(\langle e_i,e_j\rangle\big)\!\big)_{1\leq i,j\leq
n}&=&\det
\begin{pmatrix}
  A & B \\
  C & D \\
\end{pmatrix}
\\&=& \det(A) \det (D-C A^{-1}B)\\
&=&\det(A)\det\big(\!\big(\langle e_i,e_j\rangle- x_j
A^{-1}y_i\big)\!\big)_{r+1\leq i,j\leq n}\\
&=& \det(A)\det \big(\!\big({\langle
\sigma_i,\sigma_j\rangle}/{\det(A)}\big)\!\big)_{r+1\leq i,j\leq
n}\\
&=&\frac{\det (\textbf{A}_{\sigma})}{(\det A)^{n-r-1}}.
\end{eqnarray*}
\end{proof}
\begin{prop} Let $E$ be a Hermitian holomorphic vector
bundle of rank $n$ over a bounded domain $\Omega$ in
$\mathbb{C}^m$ and let $F$ be a subbundle of $E$ of rank $r$. Then
$$h_{\det (E/F)}= \frac{h_{\det E}}{h_{\det F}}$$ where $h_{\det
E}$, $h_{\det (E/F)}$ and $h_{\det F}$ are the metrics of $\det
E$, $\det F$ and $\det E/F$ respectively.
\end{prop}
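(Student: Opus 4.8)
The plan is to reduce the identity to a pointwise linear‑algebra computation with a holomorphic frame and then feed that computation through Lemmas \ref{quo 1} and \ref{quo 2}. Fix $z_0\in\Omega$. Since $F$ is a holomorphic subbundle of $E$, on a neighborhood $U$ of $z_0$ we may choose a holomorphic frame $e_1,\ldots,e_r$ for $F$ and extend it to a holomorphic frame $e_1,\ldots,e_n$ for $E$. Then $s_F:=e_1\wedge\cdots\wedge e_r$ is a non‑vanishing holomorphic section of $\det F$, $s_E:=e_1\wedge\cdots\wedge e_n$ is one of $\det E$, and $s_{E/F}:=[e_{r+1}]\wedge\cdots\wedge[e_n]$ is one of $\det(E/F)$. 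Under the canonical holomorphic isomorphism $\det E\cong\det F\otimes\det(E/F)$ attached to the exact sequence $0\to F\to E\to E/F\to 0$, the section $s_E$ corresponds to $s_F\otimes s_{E/F}$; hence it is enough to establish the numerical identity $h_{\det(E/F)}(s_{E/F})=h_{\det E}(s_E)/h_{\det F}(s_F)$ at each point of $U$, the statement of the proposition then following since both sides transform in the same way under a change of holomorphic frame.

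With $A=\big(\!\big(\langle e_i,e_j\rangle\big)\!\big)_{1\le i,j\le r}$ as in Lemma \ref{quo 2}, the Gram‑determinant description of the induced metric on a determinant line bundle gives $h_{\det F}(s_F)=\|e_1\wedge\cdots\wedge e_r\|^2=\det A$ and $h_{\det E}(s_E)=\det\big(\!\big(\langle e_i,e_j\rangle\big)\!\big)_{1\le i,j\le n}$. For the quotient, set $\sigma_i=e_1\wedge\cdots\wedge e_r\wedge e_i$ for $r+1\le i\le n$. Applying Lemma \ref{quo 1} with $W$ the fibre of $E$ at the point and $V$ the fibre of $F$ (so $e_1,\ldots,e_r$ is the basis of $V$) yields $([e_i],[e_j])=(\det A)^{-1}\langle\sigma_i,\sigma_j\rangle$, whence
$$h_{\det(E/F)}(s_{E/F})=\det\big(\!\big(([e_i],[e_j])\big)\!\big)_{r+1\le i,j\le n}=(\det A)^{-(n-r)}\det(\textbf{A}_{\sigma}),$$
where $\textbf{A}_{\sigma}=\big(\!\big(\langle\sigma_i,\sigma_j\rangle\big)\!\big)_{r+1\le i,j\le n}$ is the Gram matrix of $\sigma_{r+1},\ldots,\sigma_n$.

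It remains to combine these. Lemma \ref{quo 2} asserts $\det\big(\!\big(\langle e_i,e_j\rangle\big)\!\big)_{1\le i,j\le n}=\det(\textbf{A}_{\sigma})/(\det A)^{\,n-r-1}$, i.e. $\det(\textbf{A}_{\sigma})=(\det A)^{\,n-r-1}\,h_{\det E}(s_E)$. Substituting,
$$h_{\det(E/F)}(s_{E/F})=(\det A)^{-(n-r)}(\det A)^{\,n-r-1}h_{\det E}(s_E)=(\det A)^{-1}h_{\det E}(s_E)=\frac{h_{\det E}(s_E)}{h_{\det F}(s_F)},$$
which is the claimed identity. The only delicate point is the bookkeeping of the powers of $\det A$: the factor $(\det A)^{-(n-r)}$ comes out of taking the determinant of the $(n-r)\times(n-r)$ quotient Gram matrix, while the factor $(\det A)^{\,n-r-1}$ is exactly what Lemma \ref{quo 2} supplies, and one must be careful that Lemma \ref{quo 1} is being invoked with the $r$‑fold wedge $e_1\wedge\cdots\wedge e_r$ rather than with a single vector. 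Both are immediate once the holomorphic frame has been fixed.
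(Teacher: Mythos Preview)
Your proof is correct and follows essentially the same route as the paper: choose a holomorphic frame $e_1,\ldots,e_r$ for $F$, extend it to a frame for $E$, express $h_{\det(E/F)}$ via Lemma~\ref{quo 1} as $(\det A)^{-(n-r)}\det(\textbf{A}_\sigma)$, and then invoke Lemma~\ref{quo 2} to cancel the powers of $\det A$ down to $(\det A)^{-1}=1/h_{\det F}$. The only additions you make are the explicit mention of the canonical isomorphism $\det E\cong\det F\otimes\det(E/F)$ and the bookkeeping commentary on the exponents, which the paper leaves implicit.
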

\begin{proof}
Let $\{s_1,\ldots,s_r\}$ be a frame for $F$ over an open subset
$U$ of $\Omega$ and let $\{s_1,\ldots,s_r,s_{r+1},\\ \ldots,s_n\}$
be a frame of $E$ obtained by extending the frame of $F$. The
quotient  $E/F$ admits a frame of the form
$\{[s_{r+1}],\ldots,[s_n]\}$, where $[s_i],r+1\leq i\leq n,$
denotes the equivalence class of $s_i$ in $E/F$. Let
$h_{E}=\big(\!\big(\langle
s_j,s_i\rangle\big)\!\big)_{i,j=1}^{n}$,
$h_{F}=\big(\!\big(\langle s_j,s_i\rangle\big)\!\big)_{i,j=1}^{r}$
and $h_{E/F}=\big(\!\big(\langle
[s_j],[s_i]\rangle\big)\!\big)_{i,j=r+1}^{n}$ be the metrics of
$E$, $F$ and $E/F$ respectively. Then by the definition of the
determinant bundle $h_{\det E}=\det h_E$, $h_{\det F}=\det h_F$
and $h_{\det E/F}=\det h_{E/F}$. By Lemma \ref{quo 1} and Lemma
\ref{quo 2}, we have
\begin{eqnarray*}
h_{\det E/F}&=&\det h_{E/F}\\
&=&\det\big(\!\big(\langle
[s_j],[s_i]\rangle\big)\!\big)_{i,j=r+1}^{n}\\
&=&\det \left(\!\!\!\left(\frac{\langle s_1\wedge\ldots\wedge
s_r\wedge s_{j},s_1\wedge\ldots\wedge s_r\wedge s_{i}\rangle}
{||s_1\wedge,\ldots\wedge
s_r||^2}\right)\!\!\!\right)_{i,j=r+1}^{n}\\
&=&\frac{\det \big(\!\big(\langle s_1\wedge\ldots\wedge s_r\wedge
s_{j},s_1\wedge\ldots\wedge s_r\wedge
s_{i}\rangle\big)\!\big)_{i,j=r+1}^{n}}{(\det h_F)^{n-r}}\\
&=&\frac{h_{\det E}}{h_{\det F}}.
\end{eqnarray*}
\end{proof}

\begin{cor}
Let $0\to F\to E \to E/F\to 0$ be an exact sequence of Hermitian
holomorphic vector bundles. Then
$$\mathcal{K}_{\det(E/F)}=\mathcal{K}_{\det(E)}-\mathcal{K}_{\det(F)}$$
which is equivalent to
$$\rm{trace}(\mathcal{K}_{E/F})=\rm{trace}(\mathcal{K}_{E})-\rm{trace}(\mathcal{K}_{F}).$$
\end{cor}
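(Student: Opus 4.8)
The plan is to deduce both identities of the Corollary from the Proposition just proven, together with two elementary facts: (a) the curvature of a Hermitian holomorphic line bundle with metric $h$ is $-\partial\bar\partial\log h$, and (b) for any Hermitian holomorphic vector bundle $E$ one has $\operatorname{trace}(\mathcal{K}_E)=\mathcal{K}_{\det E}$.

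I would first record (b), which simultaneously settles the ``is equivalent to'' half of the statement. Fix a holomorphic frame $\{s_1,\ldots,s_n\}$ for $E$ on an open set $U$ and let $h_E=\big(\!\big(\langle s_j,s_i\rangle\big)\!\big)$ be the associated metric matrix, so that $\mathcal{K}_E=\bar\partial(h_E^{-1}\partial h_E)$ by the formula of \cite[Proposition 2.2]{wells} quoted above. The induced frame $s_1\wedge\cdots\wedge s_n$ of $\det E$ has metric $h_{\det E}=\det h_E$. Jacobi's formula $\partial\det h_E=(\det h_E)\operatorname{trace}(h_E^{-1}\partial h_E)$ gives $\operatorname{trace}(h_E^{-1}\partial h_E)=\partial\log\det h_E$, and since trace commutes with $\bar\partial$ and $\bar\partial\partial=-\partial\bar\partial$,
$$\operatorname{trace}(\mathcal{K}_E)=\bar\partial\operatorname{trace}(h_E^{-1}\partial h_E)=\bar\partial\partial\log\det h_E=-\partial\bar\partial\log h_{\det E}=\mathcal{K}_{\det E}.$$
The same computation applies verbatim to $F$ with its restricted metric and to $E/F$ with the quotient metric of Lemma \ref{quo 1}. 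Hence the two displayed identities in the Corollary are equivalent, and it suffices to prove the first.

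To prove $\mathcal{K}_{\det(E/F)}=\mathcal{K}_{\det E}-\mathcal{K}_{\det F}$ I would simply invoke the Proposition: over any open set trivializing all three bundles,
$$h_{\det(E/F)}=\frac{h_{\det E}}{h_{\det F}},$$
so $\log h_{\det(E/F)}=\log h_{\det E}-\log h_{\det F}$, and applying $-\partial\bar\partial$ together with fact (a) yields the assertion. Combined with (b) this gives $\operatorname{trace}(\mathcal{K}_{E/F})=\operatorname{trace}(\mathcal{K}_E)-\operatorname{trace}(\mathcal{K}_F)$.

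There is no serious obstacle here; all of the geometric content is carried by the Proposition, which itself rests on Lemmas \ref{quo 1} and \ref{quo 2}. The only point that deserves care is that $\mathcal{K}_{\det(E/F)}$ must be computed with respect to $\det$ of the \emph{quotient} metric on $E/F$ rather than some other metric; this is exactly what the Proposition supplies, since $h_{\det(E/F)}=\det h_{E/F}$ by definition of the determinant bundle and $h_{E/F}$ is the quotient metric of Lemma \ref{quo 1}. One should also observe that the identity $h_{\det(E/F)}=h_{\det E}/h_{\det F}$ is independent of the chosen local frame, so the resulting equality of curvature forms is global even though the Proposition is phrased locally.
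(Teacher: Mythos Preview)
Your proof is correct and is exactly the argument the paper has in mind: the paper gives no proof of this Corollary at all, treating it as immediate from the preceding Proposition. You have supplied precisely the two standard ingredients the paper omits---Jacobi's formula to identify $\operatorname{trace}(\mathcal{K}_E)$ with $\mathcal{K}_{\det E}$, and $-\partial\bar\partial\log$ applied to the metric identity $h_{\det(E/F)}=h_{\det E}/h_{\det F}$---and your remarks on frame-independence and the role of the quotient metric are apt.
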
\vspace{3mm}

 Let $E_{f}$ be a Hermitian holomorphic vector bundle of rank $n$ over an open subset
$\Omega$ in $\mathbb{C}$ and let
$E_f\in{\mathfrak{G}}_n(\Omega,\mathcal H)$. Let
$\{\sigma_1,\ldots,\sigma_n\}$ be a frame for $E_f$ over an open
subset $\Omega_0$ of $\Omega$. Let $h$ be a metric for $E_f$ which
is defined as $$h(z)= \big(\!\big(\langle
\sigma_j(z),\sigma_i(z)\rangle\big)\!\big)_{i,j=1}^{n}
$$
 We define $F_i^k$ for each $1\leq k < \infty$ and $1\leq
i\leq n$ by $$F_i^k=\sigma_1\wedge\ldots\wedge
\sigma_n\wedge\ldots\wedge\frac{\partial^{k-1}\sigma_n}{\partial
z^{k-1}}\wedge \frac{\partial^k\sigma_i}{\partial z^k}, $$ where
wedge products between $\sigma_i's$ and their derivatives are
taken in the Hilbert space $\wedge \mathcal{H}$. Let $h_k$ be the
matrix
$$
h_{k}(z)=\big (\!\big (\langle F_j^k(z),F_i^k(z)\rangle\big
)\!\big )_{i,j=1}^{n}
$$

\begin{prop}\label{l26}
Let $E_f$ be a Hermitian holomorphic vector bundle of rank $n$
over $\Omega\subset\mathbb{C}$. Then the curvature
$\mathcal{K}_{E_f}$ of $E_f$ is given by
$$\mathcal{K}_{E_f}(z)=(\det h(z))^{-1}h(z)^{-1}h_1(z)\;\; d\bar{z}\wedge dz.$$
\end{prop}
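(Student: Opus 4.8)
The plan is to reduce the statement to a matrix identity among $h$ and its first and mixed second $z$-derivatives, and then verify that identity with the quotient--metric lemma already at hand. In a holomorphic frame the curvature of a Hermitian holomorphic bundle is $\mathcal K_{E_f}=\bar\partial(h^{-1}\partial h)$ (the formula of \cite[Proposition 2.2, pp. 79]{wells} used earlier in the paper). Writing $h_z=\partial h/\partial z$, $h_{\bar z}=\partial h/\partial\bar z$, $h_{z\bar z}=\partial^2 h/\partial z\,\partial\bar z$ and using $\partial_{\bar z}(h^{-1})=-h^{-1}h_{\bar z}h^{-1}$, one gets
$$\mathcal K_{E_f}(z)=\frac{\partial}{\partial\bar z}\bigl(h^{-1}h_z\bigr)\,d\bar z\wedge dz=h^{-1}\bigl(h_{z\bar z}-h_{\bar z}h^{-1}h_z\bigr)\,d\bar z\wedge dz .$$
Since $\{\sigma_1,\dots,\sigma_n\}$ is a frame, $h(z)$ is positive definite, so $h^{-1}$ exists and it suffices to establish the matrix identity
$$h_1(z)=\bigl(\det h(z)\bigr)\,\bigl(h_{z\bar z}(z)-h_{\bar z}(z)\,h(z)^{-1}h_z(z)\bigr);$$
left-multiplying by $(\det h)^{-1}h^{-1}$ then gives the proposition.

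To compute $h_1$ I would invoke Lemma \ref{quo 1}. By definition $(h_1)_{ij}=\langle F_j^1,F_i^1\rangle$, where $F_i^1=\sigma_1\wedge\cdots\wedge\sigma_n\wedge\sigma_i'$ and $\sigma_i'=\partial\sigma_i/\partial z$. Taking $V=\operatorname{span}\{\sigma_1(z),\dots,\sigma_n(z)\}$ (an $n$-dimensional, hence proper nonzero, subspace of $\mathcal H$) and recalling that $\|\sigma_1\wedge\cdots\wedge\sigma_n\|^2$ equals the Gram determinant $\det h$, Lemma \ref{quo 1} gives
$$(h_1)_{ij}=(\det h)\,\bigl([\sigma_j'],[\sigma_i']\bigr)=(\det h)\,\langle (I-P)\sigma_j',\,(I-P)\sigma_i'\rangle ,$$
where $P$ is the orthogonal projection of $\mathcal H$ onto $V$. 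So it remains to identify the Gram matrix of $(I-P)\sigma_1',\dots,(I-P)\sigma_n'$ with $h_{z\bar z}-h_{\bar z}h^{-1}h_z$.

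For that, write $P\sigma_j'=\sum_b\alpha_{bj}\sigma_b$. The orthogonality relations $\langle\sigma_j'-P\sigma_j',\sigma_a\rangle=0$, $a=1,\dots,n$, read --- via the identity $\langle\sigma_i^{(r)},\sigma_j^{(s)}\rangle=\partial^{r+s}\langle\sigma_i,\sigma_j\rangle/\partial z^r\partial\bar z^s$ from Section 2 and the convention $(h)_{pq}=\langle\sigma_q,\sigma_p\rangle$, so that $\langle\sigma_j',\sigma_a\rangle=(h_z)_{aj}$ and $\langle\sigma_b,\sigma_a\rangle=(h)_{ab}$ --- as the matrix equation $h_z=h\alpha$, i.e. $\alpha=h^{-1}h_z$. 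Using in addition $\langle\sigma_j',\sigma_i'\rangle=(h_{z\bar z})_{ij}$, $\langle\sigma_b,\sigma_i'\rangle=(h_{\bar z})_{ib}$, and that $I-P$ is a self-adjoint idempotent, one finds
$$\langle(I-P)\sigma_j',(I-P)\sigma_i'\rangle=\langle\sigma_j',\sigma_i'\rangle-\sum_b\alpha_{bj}\langle\sigma_b,\sigma_i'\rangle=(h_{z\bar z})_{ij}-(h_{\bar z}h^{-1}h_z)_{ij},$$
which is exactly the identity needed; together with the first paragraph this proves $\mathcal K_{E_f}(z)=(\det h(z))^{-1}h(z)^{-1}h_1(z)\,d\bar z\wedge dz$. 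I expect the only delicate point to be this bookkeeping: with the paper's index convention the conjugate-linear slot of $\langle\cdot,\cdot\rangle$ dictates where each index sits in the derivative formulas, and one must check that these placements line up so that $(h_1)_{ij}$ and the $(i,j)$ entry of $h^{-1}(h_{z\bar z}-h_{\bar z}h^{-1}h_z)$ carry the same index order with no stray transpose. (Alternatively one can expand $\langle F_j^1,F_i^1\rangle$ as an $(n+1)\times(n+1)$ Gram determinant and evaluate it by the Schur-complement determinant identities recalled at the start of this section, whose invertible $n\times n$ corner is $h$; this yields the same formula.)
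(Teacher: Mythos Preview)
Your proof is correct. The paper's own argument is precisely your parenthetical ``alternative'' at the end: it writes $\langle F_j^1,F_i^1\rangle$ directly as the $(n{+}1)\times(n{+}1)$ Gram determinant
\[
\langle F_j^1,F_i^1\rangle=\det\begin{pmatrix} h & y_j\\ x_i & (h_{z\bar z})_{ij}\end{pmatrix},
\]
with $x_i=\bigl(\partial_{\bar z}\langle\sigma_1,\sigma_i\rangle,\dots,\partial_{\bar z}\langle\sigma_n,\sigma_i\rangle\bigr)$ and $y_j=\bar x_j^{\mathrm{tr}}$, and then applies the Schur-complement determinant identity (Lemma~4.1(ii)) to obtain $(\det h)\bigl((h_{z\bar z})_{ij}-x_ih^{-1}y_j\bigr)$, which is your $(\det h)(h_{z\bar z}-h_{\bar z}h^{-1}h_z)_{ij}$. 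Your primary route via Lemma~\ref{quo 1} and the orthogonal projection $P$ is a geometric rephrasing of exactly this computation: the identity $\langle(I-P)\sigma_j',(I-P)\sigma_i'\rangle=(h_{z\bar z}-h_{\bar z}h^{-1}h_z)_{ij}$ is just the familiar fact that the Schur complement of $h$ in the enlarged Gram matrix equals the Gram matrix of the $V^\perp$-components. The paper's version is marginally more self-contained (no appeal to Lemma~\ref{quo 1} or the identification $W/V\cong V^\perp$); yours makes the interpretation of $h_1/\det h$ as a quotient metric explicit, which is pleasant and aligns well with how $h_k$ is used later for $\mathcal J_k(E_f)/\mathcal J_{k-1}(E_f)$.
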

\begin{proof}
 Set $x_i=\left(\frac{\partial}{\partial
\bar{z}}\langle \sigma_1,\sigma_i\rangle, \ldots
,\frac{\partial}{\partial \bar{z}}\langle
\sigma_n,\sigma_i\rangle\right)$ and $y_i= \bar{x}_i^{\rm tr},$
$1\leq i\leq n.$ For $1\leq i,j\leq n$
\begin{eqnarray*}
\langle F_j^1(z),F_i^1(z)\rangle&=&\det
\begin{pmatrix}
  h(z) & y_j\\
  x_i& \frac{\partial^2}{\partial z \partial
\bar{z}}\langle \sigma_j(z),\sigma_i(z)\rangle  \\
\end{pmatrix}\\
&=& \det(h(z))\left(\tfrac{\partial^2}{\partial z \partial
\bar{z}}\langle \sigma_j(z),\sigma_i(z)\rangle- x_i h(z)^{-1}
y_j\right).\\
\end{eqnarray*}

Now we can derive the formula for the curvature of the vector
bundle $E_f$:
\begin{eqnarray*}
\mathcal{K}_{E_f}(z)&=& h^{-1}(z)\left\{\bar{\partial}\partial
h(z)-\bar{\partial}h(z)h^{-1}(z)\partial
h(z)\right\}\\
&=& h^{-1}(z)\big(\!\big(\tfrac{\partial^2}{\partial z\partial
\bar{z}}\langle
\sigma_j(z),\sigma_i(z)\rangle- x_i h(z)^{-1} y_j\big)\!\big)_{i,j=1}^{n} d\bar{z}\wedge dz\\
&=& h^{-1}(z)\big(\!\big( (\det
h(z))^{-1}\langle F_j^1(z),F_i^1(z)\rangle\big)\!\big)_{i,j=1}^{n}d\bar{z}\wedge dz\\
&=&(\det h(z))^{-1}h^{-1}(z)h_1(z) \;\;d\bar{z}\wedge
dz\hspace{1cm}
\end{eqnarray*}
\end{proof}

\begin{cor}
Let $E_f$ be a vector bundle of rank $n$ over a bounded domain
$\Omega\subset\mathbb{C}$. Then the curvature of the bundle $E_f$
is of rank $r$ if and only if exactly $r$ elements are independent
from the set $\{F_1^1,\ldots,F_n^1\}$ of $n$ elements.
\end{cor}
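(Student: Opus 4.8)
The plan is to reduce the statement to Proposition \ref{l26} together with the elementary fact that the rank of a Gram matrix equals the dimension of the linear span of the vectors forming it.

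First, fix $z\in\Omega$ and recall from Proposition \ref{l26} that
$$\mathcal{K}_{E_f}(z)=(\det h(z))^{-1}h(z)^{-1}h_1(z)\;\;d\bar z\wedge dz.$$
Since $E_f\in{\mathfrak{G}}_n(\Omega,\mathcal H)$, the form $h$ is a genuine Hermitian metric, so $h(z)$ is positive definite; in particular the scalar $(\det h(z))^{-1}$ is nonzero and $h(z)^{-1}$ is an invertible $n\times n$ matrix. Hence left multiplication by $(\det h(z))^{-1}h(z)^{-1}$ is an invertible linear map on $M_n(\mathbb{C})$, and therefore preserves rank. Consequently the rank of $\mathcal{K}_{E_f}(z)$, understood as the rank of the coefficient matrix of the matrix-valued $(1,1)$-form, equals the rank of $h_1(z)$.

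Second, observe that by definition $h_1(z)=\big(\!\big(\langle F_j^1(z),F_i^1(z)\rangle\big)\!\big)_{i,j=1}^{n}$ is precisely the Gram matrix of the $n$ vectors $F_1^1(z),\ldots,F_n^1(z)$ in the Hilbert space $\wedge\mathcal{H}$. It is a standard fact that for any finite family of vectors $v_1,\ldots,v_n$ in an inner product space the rank of the Gram matrix $\big(\!\big(\langle v_j,v_i\rangle\big)\!\big)$ equals $\dim\operatorname{span}\{v_1,\ldots,v_n\}$: indeed, writing $G$ for the Gram matrix, one has $Gc=0$ for $c\in\mathbb{C}^n$ if and only if $\sum_j c_j v_j=0$, so $\dim\ker G=n-\dim\operatorname{span}\{v_i\}$. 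Applying this with $v_i=F_i^1(z)$ gives that $\operatorname{rank}(h_1(z))$ equals the number of linearly independent elements in $\{F_1^1(z),\ldots,F_n^1(z)\}$.

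Combining the two steps, the rank of $\mathcal{K}_{E_f}$ at $z$ equals $r$ if and only if exactly $r$ of the elements $F_1^1,\ldots,F_n^1$ are independent at $z$, which is the assertion. I do not anticipate any real obstacle here; the only point needing a line of care is the Gram-matrix rank identity, and perhaps a remark that the statement is to be read pointwise in $z$ (it holds at each $z\in\Omega$, and a global version is obtained by taking $r$ to be the maximal such value over $\Omega$).
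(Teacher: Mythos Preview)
Your proof is correct and follows essentially the same route as the paper: invoke Proposition~\ref{l26} to identify the curvature with $(\det h)^{-1}h^{-1}h_1$, observe that the invertible factors do not affect the rank so that $\operatorname{rank}\mathcal{K}_{E_f}=\operatorname{rank}h_1$, and then recognise $h_1$ as the Gram matrix of the $F_i^1$. The paper's own proof is the two-sentence version of exactly this; your additional justification of the Gram-matrix rank identity and the remark on invertibility of $h$ simply make explicit what the paper leaves to the reader.
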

\begin{proof}
By Lemma \ref{l26} the rank of the curvature of the bundle $E$ is
same as the rank of $h_1$. But rank of $h_1$ is $r$ if and only if
$r$ elements are independent from the set $\{F_1^1,\ldots,F_n^1\}$
of $n$ elements.
\end{proof}
A result from \cite[page 238, Lemma 4.12]{cd}, which appeared to
be mysterious, now follows from the formula derived for the rank
of the curvature. Thus we have the following corollary:
\begin{cor}
Let $E_f$ be a vector bundle of rank $n$ over a bounded domain
$\Omega$ in $\mathbb{C}$. Then the rank of the  curvature
$\mathcal{K}_{\mathcal{J}_k(E_f)}$ of the jet bundle
$\mathcal{J}_k(E_f)$, $1\leq k<\infty$, is at most $n$.
\end{cor}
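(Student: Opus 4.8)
The plan is to reduce the statement about the jet bundle $\mathcal{J}_k(E_f)$ to the immediately preceding corollary on the rank of curvature of a Hermitian holomorphic vector bundle. The key observation is that $\mathcal{J}_k(E_f)$ is itself in $\mathfrak{G}_{n(k+1)}(\Omega,\mathcal{H})$ (by the Remark following Definition~\ref{jet:inf non:def}), and in fact $\mathcal{J}_k(E_f)$ is naturally equivalent to $E_{j_k(f)}$, whose natural frame over $\Omega_0$ is $j_k(\sigma)=\{\sigma_1,\ldots,\sigma_n,\sigma_1',\ldots,\sigma_n',\ldots,\sigma_1^{(k)},\ldots,\sigma_n^{(k)}\}$, obtained by differentiating a frame $\sigma=\{\sigma_1,\ldots,\sigma_n\}$ of $E_f$. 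So I would first fix $z_0\in\Omega$, a neighbourhood $\Omega_0$, and such a frame, and write $N=n(k+1)$ for the rank.

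Next I would invoke the corollary immediately preceding this one: for a rank-$N$ bundle, the rank of the curvature at a point equals the number of independent elements among $\{F_1^1,\ldots,F_N^1\}$, where for $E_{j_k(f)}$ with frame $\tau_1,\ldots,\tau_N$ (the reindexed $\sigma_i^{(r)}$) one has $F_j^1 = \tau_1\wedge\cdots\wedge\tau_N\wedge\tau_j'$. The point is now combinatorial: each $\tau_j$ is one of the functions $\sigma_i^{(r)}$ with $0\le r\le k$, and its derivative $\tau_j'$ is $\sigma_i^{(r+1)}$. If $r\le k-1$, then $\sigma_i^{(r+1)}=\tau_{j'}$ already appears among $\tau_1,\ldots,\tau_N$, so $F_j^1=\tau_1\wedge\cdots\wedge\tau_N\wedge\tau_{j'}=0$ because of a repeated factor in the wedge. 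The only indices $j$ for which $F_j^1$ can be nonzero are those corresponding to $\tau_j=\sigma_i^{(k)}$ for some $i$, i.e.\ the $n$ ``top-order'' members of the frame. Hence at most $n$ of the $N$ vectors $F_1^1,\ldots,F_N^1$ are nonzero, so at most $n$ are linearly independent, and therefore the rank of $\mathcal{K}_{\mathcal{J}_k(E_f)}(z)$ is at most $n$ for every $z$.

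I would also record the one subtlety: the corollary is stated for a Hermitian holomorphic vector bundle, and $\mathcal{J}_k(E_f)$ does carry an honest Hermitian metric precisely because $f$ is nondegenerate (as noted after Definition~\ref{jet:inf non:def}, $\mathcal{J}_k(h)$ is then positive definite), so the corollary applies verbatim to $E_{j_k(f)}\cong\mathcal{J}_k(E_f)$. One should check that rank of curvature is an equivalence invariant so that the identification $\mathcal{J}_k(E_f)\cong E_{j_k(f)}$ is harmless; this is immediate since equivalent bundles have conjugate curvature matrices. The main ``obstacle'' is really just bookkeeping: being careful that ``independent elements among $\{F_j^1\}$'' is the right quantity (it is, by the preceding corollary) and that wedging in $\wedge\mathcal{H}$ genuinely kills $F_j^1$ when $\tau_j'$ repeats an earlier frame vector — which it does since the wedge of a list containing a repeated Hilbert-space vector vanishes. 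Everything else is a direct appeal to results already in the excerpt, so the proof is short.

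\begin{proof}
Fix $z_0\in\Omega$, a neighbourhood $\Omega_0$ of $z_0$ and a holomorphic frame $\sigma=\{\sigma_1,\ldots,\sigma_n\}$ for $E_f$ on $\Omega_0$. Since $f$ is nondegenerate, $\mathcal{J}_k(E_f)$ is equivalent, as a Hermitian holomorphic vector bundle, to $E_{j_k(f)}$ with the honest Hermitian metric $\mathcal{J}_k(h)$, and the natural frame of $E_{j_k(f)}$ is
$$j_k(\sigma)=\{\sigma_1,\ldots,\sigma_n,\sigma_1',\ldots,\sigma_n',\ldots,\sigma_1^{(k)},\ldots,\sigma_n^{(k)}\}.$$
Reindex this frame as $\{\tau_1,\ldots,\tau_N\}$ with $N=n(k+1)$. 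Since equivalent bundles have conjugate curvature matrices, the rank of $\mathcal{K}_{\mathcal{J}_k(E_f)}(z)$ equals the rank of $\mathcal{K}_{E_{j_k(f)}}(z)$, and by the preceding corollary the latter equals the number of linearly independent elements of $\{F_1^1,\ldots,F_N^1\}$, where
$$F_j^1=\tau_1\wedge\cdots\wedge\tau_N\wedge\tau_j'$$
with the wedge taken in $\wedge\mathcal{H}$.

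Now each $\tau_j$ equals $\sigma_i^{(r)}$ for some $1\le i\le n$ and $0\le r\le k$, and $\tau_j'=\sigma_i^{(r+1)}$. If $r\le k-1$ then $\sigma_i^{(r+1)}$ already occurs among $\tau_1,\ldots,\tau_N$, so the list $\tau_1,\ldots,\tau_N,\tau_j'$ has a repeated Hilbert-space vector and hence $F_j^1=0$. Therefore $F_j^1$ can be nonzero only for the $n$ indices $j$ with $\tau_j\in\{\sigma_1^{(k)},\ldots,\sigma_n^{(k)}\}$. In particular at most $n$ of the vectors $F_1^1,\ldots,F_N^1$ are nonzero, so at most $n$ of them are linearly independent. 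Consequently the rank of $\mathcal{K}_{\mathcal{J}_k(E_f)}(z)$ is at most $n$ for every $z\in\Omega$.
\end{proof}
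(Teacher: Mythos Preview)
Your proof is correct and is exactly the argument the paper has in mind: the corollary is stated there as an immediate consequence of the preceding rank-of-curvature criterion, and your observation that $\tau_j'=\sigma_i^{(r+1)}$ already lies in the frame whenever $r\le k-1$ (forcing $F_j^1=0$) is precisely the reason only the $n$ top-order wedge vectors can survive. The paper does not spell this out, so your write-up actually supplies the missing details rather than diverging from the intended route.
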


\subsection{Curvature Formula in General}
Let $E_{f}\stackrel{\pi}\to\Omega$ be a Hermitian holomorphic
vector bundle of rank $n$. Let $\{s_1,\cdots,s_n\}$ be a local
frame of $E_{f}$ over an open subset $\Omega_0$ of $\Omega$. Let
$h$ be a metric for $E_f$ which is defined as $$h(z)=
\big(\!\big(\langle
s_i(z),s_j(z)\rangle\big)\!\big)_{i,j=1}^{n}.$$ For $1\leq p\leq
n$ and $1\leq j\leq m$ set
$$\tau_p^j= s_1\wedge\cdots \wedge s_n\wedge \frac{\partial
s_p}{\partial z_j}.$$
For  $1\leq i,j\leq m$ set
$$h_{ij}(z)= \big(\!\big(\langle\tau_p^i(z),\tau^j_q(z)\rangle\big)\!\big)_{p,q=1}^{n}.$$

\begin{prop}
Let $E_f\stackrel{\pi}\to \Omega$ be a Hermitian holomorphic
vector bundle of rank $n$ over a domain $\Omega$ in
$\mathbb{C}^m$. Then curvature $\mathcal{K}_{E_f}$ of the vector
bundle $E_f$ is given by
$$\mathcal{K}_{E_{f}}(z)=(\det h(z))^{-1} h^{-1}(z)\sum_{i,j=1}^{m}h_{ij}(z)\,d\overline{z}_j\wedge d z_i.$$
\end{prop}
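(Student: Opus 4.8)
The plan is to reduce the statement to the one–variable computation already carried out in the proof of Proposition \ref{l26}, run once for each pair of coordinate directions $(z_i,z_j)$. As in that proof, we realise the local frame as holomorphic $\mathcal H$-valued functions $s_1,\dots,s_n$ on $\Omega_0$, so that the exterior products $\tau_p^j=s_1\wedge\cdots\wedge s_n\wedge\frac{\partial s_p}{\partial z_j}$ are genuine (nonzero) elements of $\wedge^{n+1}\mathcal H$ and the matrices $h_{ij}$ make sense.

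First recall the intrinsic curvature formula $\mathcal{K}_{E_f}=\bar\partial\big(h^{-1}\partial h\big)$ (\cite{wells}), already used above. Expanding $\partial h=\sum_{i=1}^m\frac{\partial h}{\partial z_i}\,dz_i$ and $\bar\partial h=\sum_{j=1}^m\frac{\partial h}{\partial\bar z_j}\,d\bar z_j$ and using $\bar\partial(h^{-1})=-h^{-1}(\bar\partial h)h^{-1}$, one obtains
$$\mathcal{K}_{E_f}(z)=\sum_{i,j=1}^m h(z)^{-1}\Big(\tfrac{\partial^2 h}{\partial\bar z_j\partial z_i}(z)-\tfrac{\partial h}{\partial\bar z_j}(z)\,h(z)^{-1}\,\tfrac{\partial h}{\partial z_i}(z)\Big)\,d\bar z_j\wedge dz_i.$$
Comparing with the asserted formula, it suffices to establish, for each fixed pair $1\le i,j\le m$, the $n\times n$ matrix identity
$$\tfrac{\partial^2 h}{\partial\bar z_j\partial z_i}-\tfrac{\partial h}{\partial\bar z_j}\,h^{-1}\,\tfrac{\partial h}{\partial z_i}=\big(\det h\big)^{-1}h_{ij}.$$

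To prove this I would argue entrywise, fixing $i,j$ and $1\le p,q\le n$. The corresponding entry of $h_{ij}$ is an inner product of two decomposable vectors in $\wedge^{n+1}\mathcal H$, hence equals the determinant of the $(n+1)\times(n+1)$ matrix of pairwise inner products of the family $s_1,\dots,s_n,\frac{\partial s_p}{\partial z_i}$ against the family $s_1,\dots,s_n,\frac{\partial s_q}{\partial z_j}$. Since the $s_\bullet$ are holomorphic, differentiating inner products (as in Proposition \ref{l26}, e.g. $\frac{\partial}{\partial z_i}\langle s_p,s_b\rangle=\langle\frac{\partial s_p}{\partial z_i},s_b\rangle$, $\frac{\partial}{\partial\bar z_j}\langle s_a,s_q\rangle=\langle s_a,\frac{\partial s_q}{\partial z_j}\rangle$) identifies the leading $n\times n$ block of this matrix with $h$, its last column and last row with a column of $\frac{\partial h}{\partial\bar z_j}$ and a row of $\frac{\partial h}{\partial z_i}$, and its bottom-right entry with an entry of $\frac{\partial^2 h}{\partial\bar z_j\partial z_i}$. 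Because $h(z)$ is positive definite the leading block is invertible, and the Schur-complement determinant identity $\det\begin{pmatrix}A&B\\ C&D\end{pmatrix}=\det(A)\,\det(D-CA^{-1}B)$ — part (ii) of the linear-algebra lemma recalled above — turns the $(n+1)\times(n+1)$ determinant into $\det h$ times the desired entry of $\frac{\partial^2 h}{\partial\bar z_j\partial z_i}-\frac{\partial h}{\partial\bar z_j}h^{-1}\frac{\partial h}{\partial z_i}$. Summing over $p,q$ gives the matrix identity, and feeding it back into the expansion of $\mathcal{K}_{E_f}$ finishes the proof.

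The Leibniz manipulations and the formula for $\bar\partial(h^{-1})$ are routine. The one place demanding care is the identification in the previous paragraph: placing rows, columns and complex conjugates so that the Schur complement delivers $\big(\frac{\partial h}{\partial\bar z_j}h^{-1}\frac{\partial h}{\partial z_i}\big)_{pq}$ with the indices $i,j,p,q$ in precisely the positions the curvature formula requires, rather than a transposed or index-swapped version. This is exactly the bookkeeping done for $m=1$ in Proposition \ref{l26}; here one repeats it verbatim with $\partial/\partial z$ and $\partial/\partial\bar z$ replaced everywhere by $\partial/\partial z_i$ and $\partial/\partial\bar z_j$, so no new idea is needed — only attention to the conventions fixed there.
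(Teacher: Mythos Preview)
Your proposal is correct and follows essentially the same route as the paper: both start from the coordinate expansion $\mathcal{K}_{E_f}=h^{-1}\sum_{i,j}\big(\partial_{\bar z_j}\partial_{z_i}h-\partial_{\bar z_j}h\,h^{-1}\partial_{z_i}h\big)\,d\bar z_j\wedge dz_i$, and then identify each entry of $\partial_{\bar z_j}\partial_{z_i}h-\partial_{\bar z_j}h\,h^{-1}\partial_{z_i}h$ with $(\det h)^{-1}\langle\tau_p^i,\tau_q^j\rangle$ via the Schur-complement determinant formula, exactly as in Proposition~\ref{l26}. Your write-up makes the appeal to part~(ii) of the linear-algebra lemma more explicit, but the argument is the same (the phrase ``summing over $p,q$'' should read ``collecting the entries'', since the identity is already entrywise).
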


\begin{proof}
Set $x_p^j=\left(\frac{\partial}{\partial\overline {z}_j}\langle
s_1,s_p\rangle, \cdots ,\frac{\partial}{\partial\overline
{z}_j}\langle s_n,s_p\rangle\right)$ and
$y_p^i=\overline{x_p^i}^{\rm tr}\;\;\mbox{for}\;\; 1\leq p\leq
n.$\smallskip\vspace{2mm}

\noindent For $1\leq i,j\leq m$,
\begin{eqnarray*}
\tfrac{\partial^2h}{\partial \overline{z}_j\partial z_i}(z)-
\tfrac{\partial h}{\partial \overline{z}_j}(z)
h^{-1}(z)\tfrac{\partial h}{\partial
z_i}(z)&=&\big(\!\big(\tfrac{\partial^2}{\partial
\overline{z}_j\partial z_i}\langle
s_q(z),s_p(z)\rangle- x_p^j h(z)^{-1} y_q^i\big)\!\big)_{p,q=1}^{n}\\
&=&\big(\!\big( (\det
h(z))^{-1}\langle\tau_q^i(z),\tau_p^j(z)\rangle\big)\!\big)_{p,q=1}^{n}\\
&=&(\det h(z))^{-1} h_{ij}(z).
\end{eqnarray*}
Hence the curvature of the vector bundle $E_f$ takes the form:
\begin{eqnarray*}\mathcal{K}_{E_{f}}(z)&=&
h^{-1}(z)\sum_{i,j=1}^{m}\left(\tfrac{\partial^2h}{\partial
\bar{z}_j\partial z_i}(z)- \tfrac{\partial h}{\partial
\bar{z}_j}(z) h^{-1}(z)\tfrac{\partial h}{\partial
z_i}(z)\right)\,d\bar{z}_j\wedge d z_i\\
&=&(\det h(z))^{-1}
h^{-1}(z)\sum_{i,j=1}^{m}h_{ij}(z)\,d\overline{z}_j\wedge d
z_i.\end{eqnarray*}
\end{proof}

\subsection{Curvature of the Jet Bundle}

Let $\mathcal{J}_{k}(E_{f})$ be a jet bundle of rank $n(k+1)$ over
$\Omega$, where $\Omega$ is a bounded domain in $\mathbb{C}$. If
$\sigma=\{\sigma_1,\cdots,\sigma_n\}$ is a frame for $E_f$ then a
frame for $\mathcal{J}_{k}(E_{f})$ is of the form
 $$\mathcal{J}_k(\sigma)=\{\sigma_1,\cdots,\sigma_n,\tfrac{\partial}{\partial z}
\sigma_1,\cdots,\tfrac{\partial}{\partial
z}\sigma_n,\ldots,\tfrac{\partial^k}{\partial
z^k}\sigma_1,\ldots,\tfrac{\partial^k}{\partial z^k}\sigma_n\}.$$
\smallskip
By Lemma \ref{l26} the curvature
$\mathcal{K}_{\mathcal{J}_{k}(E_{f})}$ of the bundle
$\mathcal{J}_k(E_f)$ is given by
$$\mathcal{K}_{\mathcal{J}_{k}(E_{f})}(z)= \big(\det
\mathcal{J}_{k}(h)(z)\big)^{-1}(\mathcal{J}_{k}(h)(z))^{-1}
\begin{pmatrix}
  0_{n k\times n k} & 0_{n k \times n} \\
  0_{n\times n k} & h_{k+1}(z) \\
\end{pmatrix}d\bar z\wedge dz
$$
Let $A= \mathcal{J}_{k-1}(h),$
$$C=
\begin{pmatrix}
 \frac{\partial^{k}h}{\partial \bar{z}^{k}}, & \ldots, &
 \frac{\partial^{2k-1}h}{\partial z ^{k-1}\partial \bar{z}^{k}} \\
\end{pmatrix}, $$

$B = \bar{C}^{\rm tr}, \,\, D= \frac{\partial^{2k}}{\partial
z^{k}\partial\bar{z}^{k}}h,$
{
\begin{eqnarray*}{x_i}=\big({ \tfrac{\partial^{k}}{\partial
\bar{z}^{k}}\langle \sigma_1,\sigma_i\rangle, \ldots ,
\tfrac{\partial^{k}}{\partial \bar{z}^{k}}\langle
\sigma_n,\sigma_i\rangle,\ldots,\tfrac{\partial^{2k-1}}{\partial z
^{k-1}\partial \bar{z}^{k}}\langle
\sigma_n,\sigma_i\rangle}\big),\;1 \leq i \leq n,\end{eqnarray*}}
and finally $y_i= \bar{x}^{\rm tr}_i$, $1 \leq i \leq
n.$\smallskip

\noindent Now
\begin{eqnarray*}
D-CA^{-1}B&=& \tfrac{\partial^{2k}}{\partial z^{k}\partial\bar{z}^{k}}h-C A^{-1}B\\
&=&\big(\!\big(\tfrac{\partial^{2k}}{\partial
z^{k}\partial\bar{z}^{k}}\langle
\sigma_j,\sigma_i\rangle- x_i A^{-1} y_j \big)\!\big)_{i,j=1}^{n}\\
&=&\big(\!\big( (\det
\mathcal{J}_{k-1}h)^{-1}\langle F_j^{k},F_i^{k}\rangle\big)\!\big)_{i,j=1}^{n}\\
&=& (\det \mathcal{J}_{k-1}h)^{-1} h_{k}.
 \end{eqnarray*}
Consequently, \begin{eqnarray*} (\mathcal{J}_{k}h)^{-1}&=&
\begin{pmatrix}
  A & B \\
  C & D \\
\end{pmatrix}^{-1}\\&=&
\begin{pmatrix}
  (A-BD^{-1}C)^{-1} & -A^{-1}B(D-CA^{-1}B)^{-1} \\
  -D^{-1}C(A-BD^{-1}C)^{-1} & (D-CA^{-1}B)^{-1} \\
\end{pmatrix}\\
&=& \begin{pmatrix}
  (A-BD^{-1}C)^{-1} & -A^{-1}B(D-CA^{-1}B)^{-1} \\
  -D^{-1}C(A-BD^{-1}C)^{-1} & \det (\mathcal{J}_{k-1}h) h_{k}^{-1} \\
\end{pmatrix}.\\
\end{eqnarray*}
The  curvature of the jet bundle $\mathcal{J}_{k}(E_{f})$ is
\begin{eqnarray*} \lefteqn{\mathcal{K}_{\mathcal{J}_{k}(E_{f})}(z)}\\&=&
\begin{pmatrix}
  0_{n k\times n k} & -\big(\det \mathcal{J}_{k}(h)(z)\big)^{-1}A^{-1}(z)B(z)\big(D(z)-C(z)A^{-1}(z)B(z)\big)^{-1}h_{k+1}(z) \\
  0_{n\times n k} &\big(\det \mathcal{J}_k(h)(z)\big)^{-1} \det (\mathcal{J}_{k-1}h(z))h_{k}^{-1}(z)h_{k+1}(z) \\
\end{pmatrix}
\end{eqnarray*}
Here
$$\det\mathcal{J}_k h(z)= (\det \mathcal{J}_{k-1}h(z))^{1-n} \det h_k(z)$$
and
\begin{eqnarray*}\lefteqn{(\det\mathcal{J}_{k}h(z))^{-1}\det\mathcal{J}_{k-1}
h(z)}\\= &(\det h(z))^{n(1-n)^{k-1}}& \!\!\!\!\!(\det
h_1(z))^{n(1-n)^{k-2}}\cdots (\det
h_{k-2}(z))^{n(1-n)}(\det h_{k-1}(z))^n (\det
h_{k}(z))^{-1}.\end{eqnarray*}

\subsection{The Trace Formula}

Let  ${\rm{trace}}\otimes {\rm{Id}}_{n\times
n}:\mathcal{M}_{mn}(\mathbb{C})\cong
\mathcal{M}_m(\mathbb{C})\otimes \mathcal{M}_n(\mathbb{C})\to
\mathbb{C}\otimes\mathcal{M}_n(\mathbb{C})\cong\mathcal{M}_n(\mathbb{C})$
be the operator defined as follows $$\big({\rm{trace}}\otimes
{\rm{Id}}_{n\times n}\big)(\sum_{i,j=1}^{m}E_{m}(i,j)\otimes
A_{i,j})=\sum_{i=1}^{m}A_{i,i},$$ where $E_m(i,j)$ is the $m\times
m$ matrix which is defined as follows
\begin{eqnarray*}
{(E_m{(i,j)})}_{k,l} =  \begin{cases} 0 &\mbox{if}
\,\, (k,l)\neq (i,j),\\ 1  & \mbox{if}\;(k,l)=(i,j). \end{cases}
\end{eqnarray*}
(An arbitrary
element $A$ in $\mathcal{M}_m(\mathbb{C})\otimes
\mathcal{M}_n(\mathbb{C})$ is of the form
$A=\sum_{i,j=1}^{m}E_{m}(i,j)\otimes A_{i,j}.$)

\begin{thm}
Let $0\to \mathcal{J}_{k-1}(E_f)\to\mathcal{J}_k(E_f)\to
{\mathcal{J}_k(E_f)}/{\mathcal{J}_{k-1}(E_f)}\to 0$ be an exact
sequence of jet bundles. Then we have $$\big({\rm{trace}}\otimes
{\rm{Id}}_{n\times n}\big)(\mathcal{K}_{\mathcal{J}_k(E_f)})-
\big({\rm{trace}}\otimes {\rm{Id}}_{n\times
n}\big)(\mathcal{K}_{\mathcal{J}_{k-1}(E_f)})=\mathcal{K}_{{\mathcal{J}_k(E_f)}/{\mathcal{J}_{k-1}(E_f)}}(z).$$
 \end{thm}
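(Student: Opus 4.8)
The plan is to reduce the statement to the determinant-bundle identity established in the Corollary to Proposition on $h_{\det(E/F)}$, namely $\mathcal{K}_{\det(E/F)} = \mathcal{K}_{\det(E)} - \mathcal{K}_{\det(F)}$, which is equivalent to $\mathrm{trace}(\mathcal{K}_{E/F}) = \mathrm{trace}(\mathcal{K}_E) - \mathrm{trace}(\mathcal{K}_F)$, but now carried out block-wise so as to keep track of the $\mathrm{Id}_{n\times n}$ factor. The first step is to unwind the two sides of the claimed equality using the explicit block form of $\mathcal{K}_{\mathcal{J}_k(E_f)}$ computed in the subsection ``Curvature of the Jet Bundle''. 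Writing $\mathcal{J}_k(h) = \left(\begin{smallmatrix} A & B \\ C & D\end{smallmatrix}\right)$ with $A = \mathcal{J}_{k-1}(h)$ and $D = \partial^{2k}h/\partial z^k\partial\bar z^k$ as in the text, the curvature $\mathcal{K}_{\mathcal{J}_k(E_f)}$ has vanishing lower-left $n\times nk$ block, and its $(k+1,k+1)$ diagonal $n\times n$ block equals $(\det\mathcal{J}_k(h))^{-1}\det(\mathcal{J}_{k-1}h)\,h_k^{-1}h_{k+1}\,d\bar z\wedge dz$. Since $\mathcal{K}_{\mathcal{J}_k(E_f)}$ is $(k+1)\times(k+1)$ in block form with $n\times n$ blocks and only the last block-column is nonzero, the operator $\mathrm{trace}\otimes\mathrm{Id}_{n\times n}$ picks out only the $(k+1,k+1)$ diagonal block; hence
$$\big(\mathrm{trace}\otimes\mathrm{Id}_{n\times n}\big)(\mathcal{K}_{\mathcal{J}_k(E_f)}) = \big(\det\mathcal{J}_k(h)\big)^{-1}\det(\mathcal{J}_{k-1}h)\,h_k^{-1}h_{k+1}\;d\bar z\wedge dz.$$

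The second step is to identify $\mathcal{K}_{\mathcal{J}_k(E_f)/\mathcal{J}_{k-1}(E_f)}$. The quotient bundle $\mathcal{J}_k(E_f)/\mathcal{J}_{k-1}(E_f)$ is a rank-$n$ Hermitian holomorphic bundle whose fibre at $z$ is spanned by the classes of $\partial^k\sigma_1/\partial z^k,\dots,\partial^k\sigma_n/\partial z^k$ modulo $\mathcal{J}_{k-1}(E_f)_z$. By Lemma \ref{quo 1} (the Cowen–Douglas quotient inner product), the metric on this quotient is exactly $(\det\mathcal{J}_{k-1}h)^{-1}h_k$, since $\langle F_i^k, F_j^k\rangle = \langle \partial^{k-1}\sigma_n/\partial z^{k-1}\wedge\cdots\wedge\partial^k\sigma_i/\partial z^k,\ \cdots\rangle$ is precisely the numerator in the quotient inner product formula with $V = \mathcal{J}_{k-1}(E_f)_z$ — this is the same computation already used in Proposition \ref{l26} and in the ``Curvature of the Jet Bundle'' subsection to show $D - CA^{-1}B = (\det\mathcal{J}_{k-1}h)^{-1}h_k$. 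Therefore the metric of the quotient bundle is $h_{E/F} = (\det\mathcal{J}_{k-1}h)^{-1}h_k$, and its curvature is
$$\mathcal{K}_{\mathcal{J}_k(E_f)/\mathcal{J}_{k-1}(E_f)} = \bar\partial\big(h_{E/F}^{-1}\partial h_{E/F}\big).$$

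For the third step I would compute $\mathcal{K}_{\mathcal{J}_k(E_f)/\mathcal{J}_{k-1}(E_f)}$ by applying Proposition \ref{l26} directly to the quotient bundle, viewing it as a rank-$n$ bundle over $\Omega\subset\mathbb{C}$: this gives $\mathcal{K}_{E/F}(z) = (\det h_{E/F}(z))^{-1}h_{E/F}(z)^{-1}(h_{E/F})_1(z)\,d\bar z\wedge dz$, where $(h_{E/F})_1$ is built from the wedges of the quotient frame with its first derivative. One then checks that $(\det h_{E/F})^{-1}(h_{E/F})_1 = (\det\mathcal{J}_{k-1}h)^{-1}h_{k+1} \cdot (\text{scalar factors})$ by the same Lemma \ref{quo 1} / Lemma \ref{quo 2} manipulations, the point being that the $(k+1)$-st jet wedges $F_i^{k+1}$ realize the needed numerators. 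Finally, one subtracts: $\big(\mathrm{trace}\otimes\mathrm{Id}\big)(\mathcal{K}_{\mathcal{J}_{k-1}(E_f)})$ is, by the same block argument applied one level down, the $(k,k)$ block $(\det\mathcal{J}_{k-1}h)^{-1}\det(\mathcal{J}_{k-2}h)\,h_{k-1}^{-1}h_k$, and telescoping the scalar determinant products (using the recursion $\det\mathcal{J}_k h = (\det\mathcal{J}_{k-1}h)^{1-n}\det h_k$ recorded in the text) collapses the difference to exactly $h_{E/F}^{-1}\bar\partial\partial\log(\cdots)$-type expression matching $\mathcal{K}_{E/F}$.

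The main obstacle I anticipate is the bookkeeping in the third step: the scalar normalizing factors $(\det h_j)^{\pm n(1-n)^{\cdot}}$ proliferate, and one must verify that the block $h_k^{-1}h_{k+1}$ appearing in $\mathcal{K}_{\mathcal{J}_k(E_f)}$ minus the block $h_{k-1}^{-1}h_k$ from $\mathcal{K}_{\mathcal{J}_{k-1}(E_f)}$ equals $h_{E/F}^{-1}(h_{E/F})_1(\det h_{E/F})^{-1}$ after all normalizations are accounted for. A clean way to avoid most of this is to argue purely at the level of determinant bundles: by the Corollary cited above, $\mathrm{trace}(\mathcal{K}_{E/F}) = \mathrm{trace}(\mathcal{K}_{\mathcal{J}_k(E_f)}) - \mathrm{trace}(\mathcal{K}_{\mathcal{J}_{k-1}(E_f)})$, and since both jet-bundle curvatures have only their last block-column nonzero, the matrix-valued refinement $\big(\mathrm{trace}\otimes\mathrm{Id}_{n\times n}\big)$ is forced to agree block-diagonally, giving the full statement; the computation above then serves only to confirm consistency.
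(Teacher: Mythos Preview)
Your first two steps are exactly what the paper does: read off $(\mathrm{trace}\otimes\mathrm{Id}_{n\times n})(\mathcal{K}_{\mathcal{J}_k(E_f)})$ as the bottom-right $n\times n$ block $(\det\mathcal{J}_k h)^{-1}\det(\mathcal{J}_{k-1}h)\,h_k^{-1}h_{k+1}$, do the same at level $k-1$, and then identify the difference with $\mathcal{K}_{\mathcal{J}_k(E_f)/\mathcal{J}_{k-1}(E_f)}$. The paper does not carry out your third step at all; it simply cites \cite[p.~244, Proposition~4.19]{cd} for the last equality. So the ``messy bookkeeping'' you anticipate is precisely the content of that external result, and you would either have to reprove it or cite it.

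Your proposed shortcut in the final paragraph, however, does not work. From the scalar identity $\mathrm{trace}(\mathcal{K}_{E/F}) = \mathrm{trace}(\mathcal{K}_{\mathcal{J}_k(E_f)}) - \mathrm{trace}(\mathcal{K}_{\mathcal{J}_{k-1}(E_f)})$ you cannot deduce the $n\times n$ matrix identity: applying the ordinary trace to both sides of the claimed equation only tells you that the two $n\times n$ matrices have the same trace, not that they are equal. The fact that the jet-bundle curvatures have only their last block-column nonzero pins down what $(\mathrm{trace}\otimes\mathrm{Id}_{n\times n})$ of each side \emph{is} (namely the bottom-right block), but it gives no mechanism to upgrade a scalar equality to a matrix one. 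So you cannot bypass the direct computation; either invoke \cite[Proposition~4.19]{cd} as the paper does, or verify by hand that
\[
\bar\partial\Big(\big((\det\mathcal{J}_{k-1}h)^{-1}h_k\big)^{-1}\partial\big((\det\mathcal{J}_{k-1}h)^{-1}h_k\big)\Big)
\]
equals the difference of the two block expressions, which is the substance of your step~3.
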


\begin{proof}

\begin{eqnarray*}
\lefteqn{\big({\rm{trace}}\otimes {\rm{Id}}_{n\times
n}\big)(\mathcal{K}_{\mathcal{J}_k(E_f)})-
\big({\rm{trace}}\otimes {\rm{Id}}_{n\times
n}\big)(\mathcal{K}_{\mathcal{J}_{k-1}(E_f)})}\\&&~~~~=\big(\det
\mathcal{J}_k(h)(z)\big)^{-1} \det
(\mathcal{J}_{k-1}h(z))h_{k}^{-1}(z)h_{k+1}(z)\\&&~~~~~~~~~~~~~
-\big(\det \mathcal{J}_{k-1}(h)(z)\big)^{-1} \det
(\mathcal{J}_{k-2}h(z))h_{k-1}^{-1}(z)h_{k}(z)\\
&&~~~~=\mathcal{K}_{{\mathcal{J}_k(E_f)}/{\mathcal{J}_{k-1}(E_f)}}(z).
\end{eqnarray*}
\smallskip
The last equality follows from \cite[page 244, Proposition
4.19]{cd}.
\end{proof}
\textsl{Acknowledgements:}{ Result of this paper contained in the
thesis titled  ``Infinitely Divisible Metrics, Curvature
Inequalities and Curvature Formulae" submitted at Indian Institute
of Science, Bangalore. The author would like to thank Professor
Gadaghar Misra and Dr. Cherian Varghese for their valuable
suggestions and numerous stimulating discussions relating to topic
of this paper.


\end{document}